\newtheorem{theorem}{Theorem}[section]
\newtheorem{lemma}[theorem]{Lemma}
\newtheorem{corollary}[theorem]{Corollary}
\newtheorem{hypothesis}[theorem]{Hypothesis}
\theoremstyle{definition}
\theoremstyle{remark}
\numberwithin{equation}{section}
\title{On the Asymptotic Formula in Waring's Problem with Shifts}
\author{Kirsti Biggs}
\address{School of Mathematics, University of Bristol, University Walk, Clifton, Bristol, BS8 1TW, United Kingdom}
\email{kirsti.biggs@bristol.ac.uk}
\subjclass[2010]{11D75, 11P05}
\keywords{Waring's problem, Diophantine inequalities, Davenport--Heilbronn method}
\thanks{The author is supported by an EPSRC Doctoral Training Partnership}
\begin{document}

\begin{abstract}
\vspace{-1em}
We show that for integers $k\geq 4$ and $s\geq k^2+(3k-1)/4$, we have an asymptotic formula for the number of solutions, in positive integers $x_i$, to the inequality $\left|(x_1-\theta_1)^k+\dotsc+(x_s-\theta_s)^k-\tau\right|<\eta$, where $\theta_i\in(0,1)$ with $\theta_1$ irrational, $\eta\in(0,1]$, and $\tau>0$ is sufficiently large. We use Freeman's variant of the Davenport--Heilbronn method, along with a new estimate on the Hardy--Littlewood minor arcs, to obtain this improvement on the original result of Chow.
\vspace{-2.2em}
\end{abstract}

\maketitle

\section{Introduction} \label{intro}
In its classical form, Waring's problem asks whether every positive integer $N$ can be represented as a sum of $s$ $k$th powers of integers, where $s$ does not depend on $N$. One generalisation of this problem, studied by Davenport and Heilbronn in the 1940s (see, for example, \cite{davheil}), was to consider diagonal inequalities of the form
\begin{equation} \label{DHineq}
\left|\lambda_1 x_1^k+\dotsc+\lambda_s x_s^k\right|<1,
\end{equation}
where the coefficients are non-zero, not all of the same sign, and not all in rational ratio. In particular, they proved that for a suitable sequence $(P_n)_{n=1}^\infty$ of large real numbers, with $P_n\rightarrow\infty$ as $n\rightarrow\infty$, the number of integer solutions to (\ref{DHineq}) with $1\leq x_1,\dotsc,x_s\leq P_n$ is at least $cP_n^{s-k}$, for some constant $c>0$. In \cite{freeman}, Freeman developed a version of their method which works for \emph{all} large values of $P$. This has become known as Freeman's variant of the Davenport--Heilbronn method, and is now a crucial tool in the study of Diophantine inequalities.

In \cite{chowWP}, Chow introduced and studied a different analogue of Waring's problem, namely that of approximating real numbers by $k$th powers of shifted integers. More precisely, for a large, positive real number $\tau$, we are interested in counting integer solutions to the inequality 
\begin{equation} \label{ineq}
\left|(x_1-\theta_1)^k+\dotsc+(x_s-\theta_s)^k-\tau\right|<\eta,
\end{equation}
for fixed natural numbers $s\geq k\geq 2$, shifts $\theta_1,\dotsc,\theta_s \in (0,1)$ with $\theta_1$ irrational, and $0<\eta\leq1$. Let $N(\tau)=N_{s,k,\bm{\theta},\eta}(\tau)$ be the number of solutions to (\ref{ineq}) in positive integers $x_1,\dotsc,x_s$. In this paper, we reduce the minimum number of variables required to obtain an asymptotic formula for $N(\tau)$. To that end, let $s_0(k)=k^2+(3k-1)/4$. Our main result is the following:
\begin{theorem} \label{mainThm}
Let $k\geq 4$, and let $s\geq s_0(k)$. Then
\begin{equation} \label{asymp}
N(\tau)=2\eta\Gamma(1+1/k)^s\Gamma(s/k)^{-1}\tau^{s/k-1}+o(\tau^{s/k-1}). 
\end{equation}
\end{theorem}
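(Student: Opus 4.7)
The plan is to establish (\ref{asymp}) via Freeman's variant of the Davenport--Heilbronn method. Set $P = \tau^{1/k}$ and introduce, for $j=1,\dotsc,s$, the exponential sums $f_j(\alpha) = \sum_{1\leq x\leq cP} e(\alpha (x-\theta_j)^k)$, with $c$ chosen so that every solution to (\ref{ineq}) contributes to $f_1\cdots f_s$. Choosing a Fej\'er-type kernel $K(\alpha)$ that approximates the indicator of $[-\eta,\eta]$ and has rapidly decaying Fourier transform, one obtains
\[
N(\tau) = \int_{\mathbb{R}} f_1(\alpha)\cdots f_s(\alpha)\, e(-\alpha\tau) K(\alpha)\, d\alpha + o(\tau^{s/k-1}).
\]
One then partitions the real line into Davenport--Heilbronn major arcs $\mathfrak{M}$ (a shrinking neighbourhood of $0$), intermediate arcs $\mathfrak{m}$ (a bounded region with $\mathfrak{M}$ removed), and trivial arcs $\mathfrak{t}$ (the unbounded complement), and analyses each in turn.

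On $\mathfrak{M}$, replacing each $f_j$ by a smooth integral reduces the contribution to a singular integral evaluating to the expected main term $2\eta\Gamma(1+1/k)^s\Gamma(s/k)^{-1}\tau^{s/k-1}$; no singular series arises, owing to the irrationality of $\theta_1$. The trivial arcs $\mathfrak{t}$ contribute negligibly via the rapid decay of $\widehat K$ combined with crude bounds on the $f_j$.

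The essential task is to show that the contribution from $\mathfrak{m}$ is $o(\tau^{s/k-1})$. Following Freeman, one applies a Hardy--Littlewood dissection inside $\mathfrak{m}$: each $\alpha\in\mathfrak{m}$ is associated to a rational approximation $a/q$, yielding HL major arcs $\mathfrak{M}_{HL}$ (close to rationals with small $q$) and HL minor arcs $\mathfrak{m}_{HL}$. On $\mathfrak{M}_{HL}$ Chow's treatment applies: the irrationality of $\theta_1$ together with an equidistribution argument yields $\sup_{\mathfrak{M}_{HL}}|f_1(\alpha)|=o(P)$, which controls the contribution in conjunction with crude bounds on $f_2,\dotsc,f_s$. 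On $\mathfrak{m}_{HL}$, the novelty is a sharper Weyl-type pointwise estimate of the form $|f_1(\alpha)|\ll P^{1-\sigma+\varepsilon}$; combining it with the Vinogradov mean-value bound in its recent strong form (via Wooley's efficient congruencing and Bourgain--Demeter--Guth) and a H\"older-type splitting gives
\[
\int_{\mathfrak{m}_{HL}}|f_1(\alpha)|^s\, d\alpha \ll P^{s-k-(s-2t)\sigma+\varepsilon}
\]
for $2t$ at the Vinogradov threshold, which is $o(P^{s-k})$ whenever $s\geq s_0(k)$.

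The main obstacle is the proof of the new Weyl-type estimate on $\mathfrak{m}_{HL}$ with a value of $\sigma$ large enough for this splitting to close. The exponent $s_0(k)=k^2+(3k-1)/4$ then emerges as the precise balance between the Vinogradov mean-value threshold (contributing the $k^2$) and the pointwise strength of the new Weyl estimate on $\mathfrak{m}_{HL}$ (contributing the $(3k-1)/4$); any further improvement over Chow's original threshold would reduce, essentially, to sharpening $\sigma$.
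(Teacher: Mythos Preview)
Your overall architecture---Davenport--Heilbronn dissection $\mathbb{R}=\mathfrak{M}\cup\mathfrak{m}\cup\mathfrak{t}$ followed by a Hardy--Littlewood sub-dissection of $\mathfrak{m}\cup\mathfrak{t}$---matches the paper. But you misidentify the new ingredient on the Hardy--Littlewood minor arcs $\mathfrak{v}$. The paper does \emph{not} prove or use a sharper pointwise Weyl estimate $|f_\theta(\alpha)|\ll P^{1-\sigma+\varepsilon}$. The novelty (Theorem~\ref{minorarcs}) is a \emph{mean value} bound
\[
\int_{\mathfrak{v}}|f_\theta(\alpha)|^{2s}|K_\pm(\alpha)|\,d\alpha \ll P^{\varepsilon}Q^{-1}\bigl(P^{s+\frac12 k(k-1)}+P^{2s-k}\bigr),
\]
obtained by adapting Wooley's method from \cite{asympform} to the shifted, real-variable setting: one inserts auxiliary Vinogradov variables, shifts the summation range, averages over the shift, and exploits the poor rational approximation on $\mathfrak{v}$ to extract the factor $Q^{-1}$ \emph{inside the mean value}. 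With $Q=(2k)^{-1}P^{1/4}$, H\"older's inequality between this estimate at the Vinogradov threshold $2s=k(k+1)$ and a trivial second-moment bound (Corollary~\ref{altcorollary}) produces $s_0(k)$; the term $(3k-1)/4$ arises from the $Q^{-1}\asymp P^{-1/4}$ saving, not from any pointwise $\sigma$. The scheme you describe---pointwise Weyl on $\mathfrak{m}_{HL}$ combined with the Vinogradov mean value at $2t=k(k+1)$---gives $\int|f|^s\ll P^{s-k-(s-k(k+1))\sigma+\varepsilon}$, which is $o(P^{s-k})$ only when $s>k(k+1)$; this is essentially Chow's argument with \cite{BDG} fed in, yielding $s\ge k^2+k+1$, not $s_0(k)$. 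So as written your plan cannot close at the claimed threshold.

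Your treatment of the Hardy--Littlewood major arcs $\mathfrak{N}\subset\mathfrak{m}\cup\mathfrak{t}$ is also too coarse. The paper does use Freeman's input $\sup_{\mathfrak{m}}|f_{\theta_1}f_{\theta_2}|\ll P^{2}T(P)^{-1}$ (Lemma~\ref{2.2}), but this alone is not enough: one must still bound $\int_{\mathfrak{N}\cap[v,v+1)}|f_{\theta_3}(\alpha)|^{s-2}\,d\alpha$ uniformly in $v$ and sum over unit intervals against $|K_\pm|$. This forces a further split $\mathfrak{N}=\mathfrak{B}\cup\overline{\mathfrak{B}}$ by the size of $|f_{\theta_3}|$, handled respectively via an approximation by complete sums (Theorem~\ref{ec1.6} and Lemma~\ref{Bakerlemma}) and via the measure bound $\text{mes}(\mathfrak{N}\cap[v,v+1))\ll Q^{2}P^{-k}$; see Section~\ref{restofminor}. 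A minor point: the absence of a singular series reflects the real-variable nature of the inequality, not the irrationality of $\theta_1$; the irrationality is what makes $T(P)\to\infty$ available in Lemma~\ref{2.2}.
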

Note that there is no explicit dependence on the shifts $\theta_1,\dotsc,\theta_s$ in the main term of (\ref{asymp}).

The best previously known bound for this problem is due to Chow, who showed in \cite{chowWP} that the asymptotic formula (\ref{asymp}) holds for $s\geq2k^2-2k+3$. However, an examination of the arguments underlying Chow's work reveals that the recent proof in \cite{BDG} of the Main Conjecture in Vinogradov's Mean Value Theorem, by Bourgain, Demeter and Guth, allows this constraint to be improved to $s\geq k^2+k+1$. Although our method also works for $k=3$, it does not improve on the best known value of 11 variables, also due to Chow, in \cite{cubes}.

To prove our result, we approximate the number of solutions to (\ref{ineq}) by a certain integral over the real line (see Section \ref{notation} for details). We use a dissection of the real line into major, minor and trivial arcs, as is usual in the Davenport--Heilbronn method, to evaluate this integral. However, in order to achieve our reduction in the number of variables required, we must also divide our arcs into points with or without good approximations by rationals with small denominators, commonly known as the major and minor arcs in the Hardy--Littlewood method.

The new estimate given in Section \ref{Aminorarc} extends the method of Wooley in \cite{asympform} to a setting appropriate to Diophantine inequalities. We first obtain a bound for the contribution to a certain mean value from points without good rational approximations, making use of the aforementioned result of Bourgain, Demeter and Guth. In order to give a more precise statement of our result, we must introduce some notation. As is usual in this area, we use $e(z)$ to denote $\exp(2\pi iz)$. For real numbers $P$, $\theta$ and $\alpha$, with $P$ large and $\theta\in(0,1)$, we define
\begin{equation*}
f_\theta(\alpha)=\sum_{1\leq x\leq P}e(\alpha(x-\theta)^k).
\end{equation*}
We define $\mathfrak{v}$ to be the real analogue of the classical Hardy--Littlewood minor arcs: namely, with $Q$ a real parameter satisfying $1\leq Q\leq P$, we define $\mathfrak{v}=\mathfrak{v}_Q$ to be the set
\begin{equation} \label{vdef}
\{\alpha\in\mathbb{R}: \mbox{for }a\in\mathbb{Z}\mbox{ and }q\in\mathbb{N}\mbox{ coprime},\left|q\alpha-a\right|\leq QP^{-k}\implies q>Q\}.
\end{equation}
Finally, we define the kernel function $K(\alpha)=\big(\frac{\sin(\pi\alpha)}{\pi\alpha}\big)^2$, which has the property (see~\cite[Lemma 4]{davheil}) that for any real number $t$, one has
\begin{equation*}
\int_{\mathbb{R}}e(t\alpha)K(\alpha)d\alpha=\max\{0,1-\left|t\right|\}.
\end{equation*}
We are now in a position to state the following result.
\begin{theorem} \label{minorsimp}
For natural numbers $s\geq 2$, $k\geq 2$, and for $\theta\in(0,1)$, we have \begin{equation} \label{msimp}
\int_{\mathfrak{v}} \left|f_\theta(\alpha)\right|^{2s}K(\alpha)\,d\alpha\ll  P^{\epsilon}Q^{-1}(P^{s+\frac{1}{2}k(k-1)}+P^{2s-k}).
\end{equation}
\end{theorem}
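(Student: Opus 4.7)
My approach adapts Wooley's method from \cite{asympform} to the Diophantine inequality setting, where integration takes place over $\mathbb{R}$ weighted by the kernel $K$. The plan rests on three ingredients. First, a pointwise bound for $f_\theta$ on $\mathfrak{v}$: given $\alpha\in\mathfrak{v}$, Dirichlet's theorem supplies a reduced fraction $a/q$ with $1\leq q\leq P^k/Q$ and $\lvert q\alpha-a\rvert\leq QP^{-k}$, and the defining condition (\ref{vdef}) forces $q>Q$. Applying Weyl differencing to $(x-\theta)^k$---whose leading coefficient is $\alpha$, the shift entering only in lower order coefficients---and combining with the resolution of Vinogradov's Main Conjecture from \cite{BDG}, I expect a pointwise estimate of the shape
$$\sup_{\alpha\in\mathfrak{v}}\lvert f_\theta(\alpha)\rvert^{2u}\ll P^{2u+\epsilon}Q^{-1}$$
for an integer $u$ of size comparable to $k(k-1)/2$.

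Second, I would establish a global real-line mean value by extending the BDG bound $\int_0^1\lvert f_\theta(\alpha)\rvert^{2t}\,d\alpha\ll P^{\epsilon}(P^{t+k(k-1)/2}+P^{2t-k})$ from the unit interval to $\mathbb{R}$ against the kernel $K$, using a dyadic decomposition that exploits the decay $K(\alpha)\ll(1+\lvert\alpha\rvert)^{-2}$. This should yield
$$\int_{\mathbb{R}}\lvert f_\theta(\alpha)\rvert^{2t}K(\alpha)\,d\alpha\ll P^{\epsilon}\bigl(P^{t+k(k-1)/2}+P^{2t-k}\bigr).$$
Third, the two inputs would be combined via the H\"older-type inequality
$$\int_{\mathfrak{v}}\lvert f_\theta\rvert^{2s}K\,d\alpha\leq\Bigl(\sup_{\alpha\in\mathfrak{v}}\lvert f_\theta(\alpha)\rvert\Bigr)^{2u}\int_{\mathbb{R}}\lvert f_\theta\rvert^{2(s-u)}K\,d\alpha,$$
and an optimal choice of $u$; this handles the $P^{2s-k}$ term cleanly.

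The principal obstacle is recovering a genuine $Q^{-1}$ saving in the $P^{s+k(k-1)/2}$ term as well: the naive pointwise-times-integral split introduces an extraneous factor of $P^u$, inadmissible when $s$ is smaller than $k(k+1)/2$. To circumvent this, I would split $\mathfrak{v}$ dyadically according to the size of the denominator $q$ in the associated Dirichlet approximation---equivalently, by the pointwise size of $\lvert f_\theta\rvert$---and estimate each slice by a slice-specific combination of pointwise and mean-value inputs, finally summing over slices. The measure of each slice is controlled by a counting argument over rationals with denominator in the relevant dyadic range, and it is this additional saving in measure that compensates for the extra factor of $P^u$. This mirrors the dissection used in \cite{asympform} and should produce both terms of (\ref{msimp}) with the full $Q^{-1}$ saving intact.
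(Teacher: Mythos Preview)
Your second ingredient is the gap. The bound
\[
\int_0^1 \lvert f_\theta(\alpha)\rvert^{2t}\,d\alpha \ll P^{\epsilon}\bigl(P^{t+\frac{1}{2}k(k-1)}+P^{2t-k}\bigr)
\]
is \emph{not} ``the BDG bound''. What \cite{BDG} controls is $J_{t,k}(P)$, the number of solutions of the full Vinogradov system, i.e.\ the $k$-fold integral $\oint_{[0,1)^k}\lvert g_k(\bm{\alpha})\rvert^{2t}\,d\bm{\alpha}$; the single-moment integral over a unit interval (or its kernel-weighted version over $\mathbb{R}$) is a different object, and for $\theta\notin\mathbb{Z}$ the unit-interval integral does not even count integer solutions to any equation, since $(x-\theta)^k$ is not an integer. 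Passing from $J_{t,k}$ to a bound on $\int_{\mathfrak{v}}\lvert f_\theta\rvert^{2s}K\,d\alpha$ is exactly where the work lies, and the mechanism is not ``pointwise Weyl bound $\times$ mean value'' with a dyadic-in-$q$ repair. In \cite{asympform} (and in the present paper) there is no pointwise Weyl input and no slicing by denominator size: instead one \emph{lifts} $\lvert f_\theta\rvert^{2s}$ to the Vinogradov sum $\lvert g_k(\bm{\alpha},\mu)\rvert^{2s}$ by inserting integrations over $\alpha_1,\dotsc,\alpha_{k-1}$ via orthogonality, then exploits the translation invariance of the Vinogradov system to introduce a shift parameter $y$, sums over an auxiliary $h$ to obtain a factor $\min\{P^{k-1},\lVert k\mu y+\alpha_{k-1}\rVert^{-1}\}$, and finally \emph{averages over $y$}. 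It is this average, combined with Dirichlet approximation to $\mu$ on $\mathfrak{v}$, that produces the full $Q^{-1}$ saving uniformly for all $s$; the remaining integral is then bounded by $J_{s,k}(2P)$ via \cite{BDG}.

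Your acknowledged loss of $P^{u}$ in the first term is a symptom of this: a sup-norm extraction of $2u$ copies of $f_\theta$ destroys the cancellation that the lifting-and-averaging argument captures, and no dyadic slicing of $\mathfrak{v}$ by $q(\alpha)$ recovers it, because the mean value you would apply on each slice is itself the thing you are trying to prove. Note also that your claim that the dissection ``mirrors the dissection used in \cite{asympform}'' is inaccurate; that paper uses the translation-average mechanism just described, not a pointwise-plus-measure argument.
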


This can be viewed as an analogue of the bound
\begin{equation*}
\int_{\mathfrak{v}_{P/(2k)}\cap [0,1)} \left|f_0(\alpha)\right|^{2s}\,d\alpha\ll  P^{\epsilon-1}(P^{s+\frac{1}{2}k(k-1)}+P^{2s-k}),
\end{equation*}
which is \cite[Theorem 1.3]{asympform}.

We make use of a variant of Theorem \ref{minorsimp} (see Theorem \ref{minorarcs}, and the subsequent conclusion in Corollary \ref{minormixed}), which provides a key input to our application of Freeman's variant of the Davenport--Heilbronn method. The number of variables required to achieve this estimate is smaller than that required by Chow to bound the contribution from points on the minor and trivial arcs, and this enables us to make our improvement as stated in Theorem \ref{mainThm}. On the major arc, we use Chow's result to obtain the main term in the asymptotic formula, while on the remainder of the minor and trivial arcs, we show that the contribution is negligible. In order to do this, we make use of the measure of the set of points with good rational approximations, noting that these points constitute only a small fraction of any given unit interval.

Experts will recognise that there is the potential to apply the key ideas of this paper to related problems, such as that of counting integral solutions to Diophantine inequalities of the shape
\begin{equation*}
\left|\lambda_1(x_1-\theta_1)^k+\dotsc+\lambda_s(x_s-\theta_s)^k-\tau\right|<\eta,
\end{equation*}
where the $\lambda_i$ are real numbers, these being essentially a combination of (\ref{DHineq}) and (\ref{ineq}). We defer such considerations to a future occasion.

We now present a brief outline of the structure of the remainder of this paper. In Section \ref{notation}, we introduce the preliminary notation required throughout the paper. In Section \ref{Aminorarc}, we present our new estimate for the contribution from the classical Hardy--Littlewood minor arcs, which ultimately allows us to improve on previously known lower bounds for the number of variables required for the asymptotic formula to hold. In Section \ref{restofminor}, we show that negligible contributions are obtained from the remainder of the minor and trivial arcs not covered by Corollary \ref{minormixed}. In Section \ref{majorarc} we present a result of Chow on the major arc, giving the main term in the asymptotic formula for the number of solutions, thus completing the proof of Theorem \ref{mainThm}.

The author would like to thank Trevor Wooley for his supervision and for suggesting this line of research, and Sam Chow for helpful conversations.

\section{Preliminary notation} \label{notation}

We now introduce the conventions and pieces of standard notation which will be used in this paper. When a statement involves $\epsilon$, we mean that the statement holds for any suitably small value of $\epsilon>0$. We let $\bm{\theta}=(\theta_1,\dotsc,\theta_s)$, and use the vector notation $1\leq\bm{x}\leq P$ to mean that $1\leq x_i\leq P$ for all $i$. Throughout, we assume that $\tau$ is sufficiently large in terms of $s,k,\bm{\theta}$ and $\eta$.

Let $P=\tau^{1/k}$, and let $N^*(\tau)$ be the number of solutions to (\ref{ineq}) with $1\leq\bm{x}\leq P$. A solution which does not meet this condition can have at most one of the variables larger than $\tau^{1/k}$, and in this situation the remaining variables must each be at most some constant multiple of $\tau^{(k-1)/k^2}$. Thus, since we may assume that $s>k^2-k+1$, it follows that
\begin{equation*}
N(\tau)-N^*(\tau)\ll \tau^{(s-1)(k-1)/k^2} =o(\tau^{s/k-1}).
\end{equation*}
It therefore suffices to prove that
\begin{equation*}
N^*(\tau)=2\eta\Gamma(1+1/k)^s\Gamma(s/k)^{-1}\tau^{s/k-1}+o(\tau^{s/k-1}). 
\end{equation*}

We use the Davenport--Heilbronn kernel $K(\alpha;\eta)=\eta\bigg({\dfrac{\sin(\pi\eta\alpha)}{\pi\eta\alpha}}\bigg)^2$, which has the property (via a slight adaptation of \cite[Lemma 20.1]{davenport}) that for any real number $t$, one has
\begin{equation} \label{Kbound}
\int_{\mathbb{R}}e(t\alpha)K(\alpha;\eta)\,d\alpha=\max\{0,1-\left|t/\eta\right|\}.
\end{equation}

Consequently, letting
\begin{equation*}
f_\theta(\alpha)=\sum_{1\leq x\leq P}e(\alpha(x-\theta)^k),
\end{equation*}
and
\begin{equation*}
f_{\bm{\theta}}(\alpha)=f_{\theta_1}(\alpha)\cdots f_{\theta_s}(\alpha),
\end{equation*}
we observe that the integral
\begin{equation} \label{orthog}
\int_{\mathbb{R}}f_{\bm{\theta}}(\alpha) e(-\tau\alpha)K(\alpha;\eta)\,d\alpha
\end{equation}
provides a weighted count of the number of solutions to (\ref{ineq}). To be precise, a tuple $(x_1,\dotsc,x_s)$ contributes 1 whenever the left-hand side of (\ref{ineq}) is equal to zero, and $1-\zeta/\eta$ whenever the left-hand side of (\ref{ineq}) is equal to $\zeta$, for some $\zeta\in(0,\eta)$.

The following lemma demonstrates the existence of a certain positive function which provides a bound on the values of the exponential sums we are interested in.
\begin{lemma} \label{2.2}
Let $k\geq 2$ be an integer, and let $\xi,\theta_1,\theta_2\in(0,1)$ with $\theta_1$ irrational. Then there exists a positive real-valued function $T(P)$, for which $T(P)\rightarrow\infty$ as $P\to\infty$, such that
\begin{equation} \label{TPbound}
\sup_{P^{\xi-k}\leq \left|\alpha\right|\leq T(P)}\left|f_{\theta_1}(\alpha)f_{\theta_2}(\alpha)\right| \ll P^2 T(P)^{-1}.
\end{equation}
\end{lemma}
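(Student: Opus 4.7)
My plan is to establish pointwise decay $f_{\theta_1}(\alpha)/P \to 0$ for each fixed $\alpha \neq 0$, upgrade this to a uniform bound on the full range of $\alpha$ appearing in the supremum, and then extract $T(P)$ by a standard diagonalisation.

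The pointwise decay is immediate from Weyl's theorem on equidistribution of polynomial sequences: expanding
$$\alpha(x-\theta_1)^k = \alpha x^k - k\theta_1\alpha x^{k-1} + \cdots + (-1)^k\alpha\theta_1^k,$$
the coefficient $-k\theta_1\alpha$ of $x^{k-1}$ is irrational whenever $\alpha \neq 0$, since $\theta_1$ is irrational. Hence $(\alpha(x-\theta_1)^k)_{x\in\mathbb{N}}$ is equidistributed modulo $1$, giving $f_{\theta_1}(\alpha) = o(P)$.

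To upgrade this to a uniform estimate on $P^{\xi-k} \leq |\alpha| \leq N$ for any fixed positive integer $N$, I would split according to the Diophantine properties of $\alpha$. For $\alpha$ with Dirichlet approximant $a/q$ satisfying $P^\delta \leq q \leq P^{k-1}$, the classical Weyl inequality yields $f_{\theta_1}(\alpha) \ll P^{1-2^{1-k}+\epsilon}$ uniformly. For $\alpha$ close to a rational $a/q$ with $q \leq P^\delta$, I would decompose the $x$-sum into progressions modulo $q$; the inner polynomial in the progression variable $m$ has coefficient of $m^{k-1}$ proportional to a nonzero multiple of $(r-\theta_1)\alpha$ for each residue class $r$, which is irrational, and a further Weyl argument applied to this inner sum supplies the cancellation that the first Weyl application missed. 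For the supplementary range of very small $|\alpha|$ with $P^{\xi-k} \leq |\alpha| \leq P^{-\delta}$, comparison of $f_{\theta_1}(\alpha)$ with $\int_1^P e(\alpha(x-\theta_1)^k)\,dx$ and the substitution $y=(x-\theta_1)^k$ gives the bound $O(|\alpha|^{-1/k}) = O(P^{1-\xi/k}) = o(P)$. Combining these cases with the trivial estimate $|f_{\theta_2}(\alpha)| \leq P$ yields
$$\sup_{P^{\xi-k}\leq|\alpha|\leq N}\frac{|f_{\theta_1}(\alpha)f_{\theta_2}(\alpha)|}{P^2} \longrightarrow 0 \quad \text{as } P \to \infty.$$

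The construction of $T(P)$ is then a standard diagonalisation: pick an increasing sequence $P_1 < P_2 < \cdots$ so that the displayed supremum is at most $N^{-2}$ whenever $P \geq P_N$, and set $T(P) = N$ for $P_N \leq P < P_{N+1}$. Then $T(P) \to \infty$ as $P \to \infty$, and (\ref{TPbound}) follows. The main obstacle is the uniform decay near rationals with small denominator, where Weyl's inequality on the leading coefficient alone is insufficient; the irrationality of $\theta_1$ must be exploited explicitly, via the progression decomposition, to extract cancellation from the second coefficient $-k\theta_1\alpha$ of the polynomial phase.
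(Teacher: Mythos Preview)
The paper does not prove this lemma at all: it simply cites \cite[Lemma~2.2]{chowWP}. So you are attempting to reconstruct the underlying Freeman--Chow argument, and your three-step structure (pointwise decay, uniform decay on each bounded range, diagonalisation to produce $T(P)$) is exactly the right one. Two points are worth flagging.

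First, a small slip: the coefficient $-k\theta_1\alpha$ need not be irrational for every $\alpha\neq 0$ (take $\alpha=1/\theta_1$). What is true is that for $\alpha$ irrational the leading coefficient $\alpha$ already suffices for Weyl's theorem, while for nonzero rational $\alpha$ the subleading coefficient $-k\theta_1\alpha$ is indeed irrational; either way the pointwise conclusion $f_{\theta_1}(\alpha)=o(P)$ holds.

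Second, and more substantively, the passage ``a further Weyl argument applied to this inner sum supplies the cancellation'' hides the genuine difficulty. Decomposing into progressions modulo $q$ and observing that the coefficient of $m^{k-1}$ in the inner phase is proportional to $(r-\theta_1)\alpha$ gives you only \emph{pointwise} decay of the inner sum, via Weyl's qualitative equidistribution theorem. What you need is decay \emph{uniform} over $\alpha$ ranging through the whole major arc around $a/q$, and Weyl's theorem furnishes no rate. A quantitative Weyl inequality on the inner sum would require Diophantine information on $(r-\theta_1)\alpha$ that you do not possess. The way Chow (following Freeman) handles this is to first restrict to major arcs with \emph{bounded} denominator $q\le Q_0$---Weyl's inequality already disposes of $Q_0<q\le P^\delta$ with a saving depending on $Q_0$---so that only finitely many centres $a/q$ remain, and then to remove the small perturbation $\beta=\alpha-a/q$ by partial summation against the partial sums $\sum_{x\le M}e((a/q)(x-\theta_1)^k)$, which are $o(M)$ uniformly in $M$ for each of the finitely many fixed $a/q$. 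Your sketch is on the right track but would need this refinement to actually close.
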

\begin{proof}
This is a special case of \cite[Lemma 2.2]{chowWP}.
\end{proof}

We divide up the real line into major, minor and trivial arcs, as is usual in the Davenport--Heilbronn method. We fix a real number $\xi\in (0,1)$, and apply Lemma \ref{2.2} to obtain the function $T(P)$. We then define
\begin{gather*}
\mathfrak{M}=\{\alpha\in\mathbb{R}:\left|\alpha\right|< P^{\xi-k}\},\\
\mathfrak{m}=\{\alpha\in\mathbb{R}:P^{\xi-k}\leq\left|\alpha\right|\leq T(P)\},
\end{gather*}
and
\begin{equation*}
\mathfrak{t}=\{\alpha\in\mathbb{R}:\left|\alpha\right|> T(P)\}.
\end{equation*}
We can therefore evaluate the integral (\ref{orthog}) using the dissection
\begin{equation} \label{DHdissect}
\mathbb{R}=\mathfrak{M}\cup\mathfrak{m}\cup\mathfrak{t}.
\end{equation}
The major arc provides the main term in the asymptotic formula for the number of solutions, while the minor and trivial arcs provide negligible contributions which form the error term.

In order to successfully evaluate the contributions from the central major arc (as in \cite[Section 3]{chowWP}), we must use a different kernel function related to $K(\alpha;\eta)$ to reduce the length of the interval which provides a non-negligible contribution. We define
\begin{equation} \label{LPdefn}
L(P)=\min\{\log{T(P)},\log{P}\},\quad\delta=\eta L(P)^{-1},
\end{equation}
and the upper and lower kernel functions
\begin{equation*}
K_\pm(\alpha)={\dfrac{\sin(\pi\alpha\delta)\sin(\pi\alpha(2\eta\pm\delta))}{\pi^2\alpha^2\delta}}.
\end{equation*}

These kernel functions are the same as those obtained in \cite[Lemma 1]{freeman} (applied with $a=\eta-\delta$ and $b=\eta$ for $K_-(\alpha)$, and with $a=\eta$ and $b=\eta+\delta$ for $K_+(\alpha)$, along with $h=1$ in both cases). Letting $U_c(t)$ denote the indicator function of the interval $(-c,c)$, the conclusion of that lemma gives us the bounds
\begin{equation*}
U_{\eta-\delta}(t)\leq \int_{\mathbb{R}}e(\alpha t)K_-(\alpha)d\alpha\leq U_{\eta}(t),
\end{equation*}
\begin{equation*}
U_{\eta}(t)\leq \int_{\mathbb{R}}e(\alpha t)K_+(\alpha)d\alpha\leq U_{\eta+\delta}(t),
\end{equation*}
and
\begin{equation} \label{K+-bound}
K_\pm(\alpha)\ll \min\{1,\alpha^{-2}L(P)\}.
\end{equation}

Letting
\begin{equation*}
R_\pm(P)=\int_{\mathbb{R}}f_{\bm{\theta}}(\alpha) e(-\tau\alpha)K_\pm(\alpha)\,d\alpha,
\end{equation*}
we therefore have
\begin{equation*}
R_-(P)\leq N^*(\tau) \leq R_+(P).
\end{equation*}
Consequently, it suffices to prove that
\begin{equation*}
R_\pm(P)=2\eta\Gamma(1+1/k)^s\Gamma(s/k)^{-1}P^{s-k}+o(P^{s-k}).
\end{equation*}

In the approximations which follow, we need to use the moduli of the above kernel functions, and as such it is helpful to note the following decomposition (see \cite[Section 2]{excepsets}). We write
\begin{equation} \label{Kdecomp}
\left|K_\pm(\alpha)\right|^2=K_1(\alpha)K_2^{\pm}(\alpha),
\end{equation} 
where
\begin{equation}\label{reallyK1}
K_1(\alpha)=\Bigg({\dfrac{\sin(\pi\alpha\delta)}{\pi\alpha\delta}}\Bigg)^2 = \delta^{-1}K(\alpha;\delta)
\end{equation} 
and
\begin{equation}\label{reallyK2}
K_2^{\pm}(\alpha)=\Bigg({\dfrac{\sin(\pi\alpha(2\eta\pm\delta))}{\pi\alpha}}\Bigg)^2 = (2\eta\pm\delta)K(\alpha;2\eta\pm\delta)
\end{equation} 
are both non-negative.

Using (\ref{Kbound}), we also note that
\begin{align} \label{K1est}
\int_\mathbb{R}K_1(\alpha)e(\alpha t)\,d\alpha&=\begin{cases}
\delta^{-1}(1-\delta^{-1}\left|t\right|), &\text{if } \left|t\right|<\delta,\\
0, &\text{otherwise,}\\
\end{cases}
\end{align}
and
\begin{align} \label{K2est}
\int_\mathbb{R}K_2^\pm(\alpha)e(\alpha t)\,d\alpha&=\begin{cases}
2\eta\pm\delta-\left|t\right|, &\text{if } \left|t\right|<2\eta\pm\delta,\\
0, &\text{otherwise.}\\
\end{cases}
\end{align}

\section{An auxiliary estimate} \label{Aminorarc}
In this section, we achieve a bound on the contribution from the traditional Hardy--Littlewood minor arcs, namely those points which are not close to a rational number with small denominator. In doing so, we improve on Chow's result for the number of variables required for the asymptotic formula (\ref{asymp}) to hold. We follow closely the method of Wooley in~\cite[Section 2]{asympform}. Thus, we firstly obtain an estimate for a related mean value, in the case where we have a single shift $\theta=\theta_1=\dotsc=\theta_s$. We then use this result, along with H\"older's inequality, to bound the quantity we are interested in, and to generalise to the case in which the shifts need not be the same.

It is convenient to introduce some further notation for use in this section. We define the exponential sums
\begin{equation*}
g(\bm{\alpha})=g_k(\bm{\alpha},\theta;P)=\sum_{1\leq x\leq P}e(\alpha_1 x+\dotsc +\alpha_{k-1}x^{k-1}+\alpha_k(x-\theta)^k),
\end{equation*}
and
\begin{equation*}
G(\bm{\beta},\mu)=G_k(\bm{\beta},\mu,\theta;P)=\sum_{1\leq x\leq P}e(\beta_1 x+\dotsc +\beta_{k-2}x^{k-2}+\mu(x-\theta)^k),
\end{equation*}
as well as the polynomials
\begin{equation*}
\sigma_{s,j}(\mathbf{x})=\sum_{i=1}^s(x_i^j-x_{s+i}^j), \quad (1\leq j\leq k-1),
\end{equation*}
and
\begin{equation*}
\sigma_{s,k}(\mathbf{x})=\sum_{i=1}^s({(x_i-\theta)}^k-{(x_{s+i}-\theta)}^k).
\end{equation*}
We use $\oint$ to denote the integral over $[0,1]^t$ for a suitable value of $t$, and we define the integral
\begin{equation*}
I^\pm_{s,k}(P,\theta)=\int_{\mathbb{R}} \left|f_\theta(\alpha)\right|^{2s}\left|K_\pm(\alpha)\right|\,d\alpha.
\end{equation*}
Let $J_{s,k}(P,\theta)$ be the number of solutions of the system
\begin{equation} \label{Jskshift}
\begin{cases}
\sigma_{s,j}(\mathbf{x})=0$,\quad $(1\leq j\leq k-1),\\
\left|\sigma_{s,k}(\mathbf{x})\right|<\eta,
\end{cases}
\end{equation}
with $1\leq\bm{x}\leq P$.
Using binomial expansions, and the fact that $\eta\leq 1$, we see that this system is equivalent to the system of Diophantine equations
\begin{equation} \label{Jsknormal}
\begin{cases}
\sigma_{s,j}(\mathbf{x})=0$,\quad $(1\leq j\leq k-1),\\
\sum_{i=1}^s(x_i^k-x_{s+i}^k)=0.
\end{cases}
\end{equation}

Letting $J_{s,k}(P)$ denote the number of solutions to (\ref{Jsknormal}) with $1\leq\bm{x}\leq P$, it is therefore the case that $J_{s,k}(P,\theta)=J_{s,k}(P)$. The quantity $J_{s,k}(P)$ has been widely studied, originally by Vinogradov in the 1930s, leading ultimately to the recent result of Bourgain, Demeter and Guth (see \cite[Theorem 1.1]{BDG}), who prove that
\begin{equation} \label{VMVT}
J_{s,k}(P)\ll P^{s+\epsilon}+P^{2s-\frac{1}{2}k(k+1)+\epsilon}
\end{equation}
for all $s\geq 1$ and $k\geq 4$. The case $k=3$ is due to Wooley in \cite{wooleyk3}.

For $1\leq Q\leq P$, we define $\mathfrak{v}=\mathfrak{v}_Q$ as in (\ref{vdef}). In later applications we will consider $Q=(2k)^{-1}P^{1/4}$. We are interested in an estimate for the minor arc portion (in the Hardy--Littlewood sense) of the integral $I^\pm_{s,k}(P,\theta)$. For $B\subset\mathbb{R}$, we write
\begin{equation} \label{Ipm}
I^\pm(B)=I^\pm_{s,k}(B,P,\theta)=\int_{B} \left|f_\theta(\alpha)\right|^{2s}\left|K_\pm(\alpha)\right|\,d\alpha.
\end{equation}
This allows us to state the key result of this section.
\begin{theorem} \label{minorarcs}
For natural numbers $s\geq 2$, $k\geq 2$, and for $\theta\in(0,1)$, we have \begin{equation*}
I^\pm(\mathfrak{v}) \ll  P^{\epsilon}Q^{-1}(P^{s+\frac{1}{2}k(k-1)}+P^{2s-k}).
\end{equation*}
\end{theorem}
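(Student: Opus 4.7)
My plan is to adapt Wooley's argument from \cite[Section 2]{asympform}, which establishes an analogous bound on the unit interval for the unshifted exponential sum $f_0$, to handle both the shift $\theta$ and the integration against the Davenport--Heilbronn kernel $|K_\pm|$ over all of $\mathbb{R}$. A first simplification comes from the decay estimate (\ref{K+-bound}), which shows that the contribution to $I^\pm(\mathfrak v)$ from $|\alpha|>L(P)$ is absorbable into the error term. For $|\alpha|\leq L(P)$, the set $\mathfrak v$ is partitioned into $\mathfrak v\cap[n,n+1)$ for $|n|\leq L(P)$; since the definition (\ref{vdef}) makes $\mathfrak v$ invariant under integer translation, the substitution $\alpha=\beta+n$ reduces us to estimating $O(L(P))$ integrals of the same shape over $\mathfrak v\cap[0,1)$, uniformly in $n$.

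Each such local integral is handled via the decomposition (\ref{Kdecomp}) and Cauchy--Schwarz, yielding
\[
\int |f_\theta|^{2s}|K_\pm|\,d\beta \leq \Bigl(\int |f_\theta|^{2s}K_1\,d\beta\Bigr)^{1/2}\Bigl(\int |f_\theta|^{2s}K_2^\pm\,d\beta\Bigr)^{1/2}.
\]
By the Fourier identities (\ref{K1est}) and (\ref{K2est}), each factor on the right is a weighted count of tuples $\mathbf{x}\in[1,P]^{2s}$ with $|\sigma_{s,k}(\mathbf{x})|$ confined to a small interval, further restricted by the minor-arc condition on $\alpha$. To bound each such count, I would run the core argument of \cite{asympform}: introduce the auxiliary sums $g(\bm{\alpha}',\alpha)$ and $G(\bm{\beta},\mu)$, and use H\"older's inequality to split $|f_\theta|^{2s}$ into a factor controlled by Vinogradov's mean value theorem (\ref{VMVT})---which, via the equivalence $J_{s,k}(P,\theta)=J_{s,k}(P)$, yields the sizes $P^{s+k(k-1)/2+\epsilon}$ and $P^{2s-k+\epsilon}$---and a factor controlled by a Weyl-type differencing estimate on $\mathfrak v$, producing the $Q^{-1}$ saving.

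The principal obstacle will be carrying out this Weyl-type differencing step in the presence of the shift $\theta$ and the translation $n$. The shift $\theta$ enters only as a constant offset to the top-degree coefficient in $g$ and $G$, and the translation $n$ multiplies $f_\theta$ by a character factor $e(n(x-\theta)^k)$; both should be absorbable into the auxiliary coefficients without disrupting the Vinogradov/H\"older structure. Once this is achieved, the final bound follows by summing over the $O(L(P))$ translations and absorbing the $L(P)$ loss into $P^\epsilon$.
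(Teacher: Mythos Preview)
Your plan has a genuine ordering problem that makes two of the steps fail as written. First, the truncation at $|\alpha|>L(P)$ using only the decay (\ref{K+-bound}) does not produce an absorbable error: the trivial bound $|f_\theta|^{2s}\leq P^{2s}$ together with $\int_{|\alpha|>L(P)}L(P)\alpha^{-2}\,d\alpha\ll 1$ gives a contribution of order $P^{2s}$, which is far larger than the target $P^{\epsilon}Q^{-1}P^{2s-k}$. Second, your use of the Fourier identities (\ref{K1est}) and (\ref{K2est}) after localising to $\mathfrak v\cap[0,1)$ is inconsistent: those identities require integration over all of $\mathbb{R}$, so once you invoke them you have discarded the minor-arc restriction and cannot subsequently run a Weyl-type argument ``on $\mathfrak v$'' to recover the $Q^{-1}$ saving. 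Conversely, if you keep the domain $\mathfrak v\cap[0,1)$, the integrals $\int|f_\theta|^{2s}K_1$ and $\int|f_\theta|^{2s}K_2^\pm$ are not weighted counts of tuples with $|\sigma_{s,k}(\mathbf{x})|$ small, so the bridge to $J_{s,k}$ breaks.

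The paper resolves both issues by reversing your order of operations: it carries the weight $|K_\pm(\mu)|$ and the unrestricted domain $\mu\in\mathfrak v\subset\mathbb{R}$ through the entire Wooley machinery (introducing the auxiliary sums $G$ and $g$, the shift variable $y$, the sum over $h$, and the averaging that yields $\Psi(\mu,\alpha_{k-1})\ll Q^{-1}P^{k-1}\log P$ via the minor-arc condition on $\mu$). Only \emph{after} the $Q^{-1}$ has been extracted does one enlarge the domain from $\mathfrak v$ to $\mathbb{R}$, apply Cauchy--Schwarz with the decomposition (\ref{Kdecomp}), and then use the Fourier identities (\ref{K1est}), (\ref{K2est}) over $\mathbb{R}$ to convert the remaining integral into the Vinogradov count $J_{s,k}(2P)$. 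In short: Weyl differencing first (to spend the minor-arc condition), Cauchy--Schwarz and Fourier last (to spend the kernel). Your proposal inverts this, and in doing so tries to spend each resource twice.
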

\begin{proof}
We would like to rewrite the integral of interest in terms of the function $G(\bm{\beta},\mu)$, in order to separate out the $x^{k-1}$ term and estimate it using the rational approximation properties of points in $\mathfrak{v}$.

For $\mathbf{h}\in\mathbb{Z}^{k-2}$, let \begin{equation*}
\delta(\mathbf{x},\mathbf{h})=\prod_{j=1}^{k-2}\Big(\int_0^1 e(\beta_j(\sigma_{s,j}(\mathbf{x})-h_j))\,d\beta_j\Big),
\end{equation*}
which, by orthogonality, is equal to $1$ if $\sigma_{s,j}(\mathbf{x})=h_j$ for all $1\leq j\leq k-2$, and zero otherwise.

For any fixed $\mathbf{x}\in [1,P]^{2s}$, there is precisely one choice of $\mathbf{h}\in\mathbb{Z}^{k-2}$ which satisfies the above condition, and by the definition of $\sigma_{s,j}$ we have $\left|\sigma_{s,j}(\mathbf{x})\right|\leq sP^j$ for $1\leq j\leq k-2$. Hence \begin{equation*}
\sum_{\left|h_1\right|\leq sP}\dotsc\sum_{\left|h_{k-2}\right|\leq sP^{k-2}}\delta(\mathbf{x},\mathbf{h})=1.
\end{equation*}

We can therefore rewrite the minor arc integral in the form \begin{align*}
I^\pm(\mathfrak{v})&= \int_{\mathfrak{v}}\sum_{1\leq \mathbf{x}\leq P}e(\mu\sigma_{s,k}(\mathbf{x}))\left|K_\pm(\mu)\right|\,d\mu\\
&=\sum_{\bm{h}}\sum_{1\leq \mathbf{x}\leq P}\delta(\mathbf{x},\mathbf{h})\int_{\mathfrak{v}}e(\mu\sigma_{s,k}(\mathbf{x}))\left|K_\pm(\mu)\right|\,d\mu,
\end{align*}
where the first summation is over $(k-2)$-tuples $\bm{h}$ satisfying $\left|h_i\right|\leq sP^i$ for $1\leq i\leq k-2$. From the definition of $G(\bm{\beta},\mu)$, we obtain
\begin{align*}
I^\pm(\mathfrak{v})&=\sum_{\bm{h}}\sum_{1\leq \mathbf{x}\leq P}\prod_{j=1}^{k-2}\Big(\int_0^1 e(\beta_j(\sigma_{s,j}(\mathbf{x})-h_j))\,d\beta_j\Big)\int_{\mathfrak{v}}e(\mu\sigma_{s,k}(\mathbf{x}))\left|K_\pm(\mu)\right|\,d\mu\\
&=\sum_{\bm{h}}\int_{\mathfrak{v}}\oint\left|G(\bm{\beta},\mu)\right|^{2s}e(-\bm{\beta}\cdot\bm{h})\left|K_\pm(\mu)\right|\,d\bm{\beta}\,d\mu.
\end{align*}
Hence, using the triangle inequality, and defining
\begin{equation} \label{Iindepy}
\mathcal{I}=\int_{\mathfrak{v}}\oint \left|G(\bm{\beta},\mu)\right|^{2s}\left|K_\pm(\mu)\right|\,d\bm{\beta}\,d\mu,
\end{equation}
we see that
\begin{align} \label{eq: 1stsplit}
I^\pm(\mathfrak{v}) &\leq\sum_{\bm{h}}\int_{\mathfrak{v}}\oint\left|G(\bm{\beta},\mu)\right|^{2s}\left|K_\pm(\mu)\right|\,d\bm{\beta}\,d\mu\nonumber\\
&\ll P^{\frac{1}{2}(k-1)(k-2)}\mathcal{I}.
\end{align}

Similarly, writing
\begin{equation*}
g(\bm{\alpha},\mu)=\sum_{1\leq x\leq P} e(\alpha_1x+\dotsc+\alpha_{k-1}x^{k-1}+\mu(x-\theta)^k),
\end{equation*}
we have 
\begin{equation} \label{Fint}
\mathcal{I}=\sum_{\left|h\right|\leq sP^{k-1}}\int_{\mathfrak{v}}\oint\left|g(\bm{\alpha},\mu)\right|^{2s}e(-\alpha_{k-1}h)\left|K_\pm(\mu)\right|\,d\bm{\alpha}\,d\mu.
\end{equation}

Let $\psi(z;\bm{\alpha})=\alpha_1z+\dotsc+\alpha_{k-1}z^{k-1}+\alpha_k(z-\theta)^k$, so that, with a shift of variables, we have\begin{equation} \label{eq: psishift}
g(\bm{\alpha})=\sum_{1\leq x\leq P}e(\psi(x;\bm{\alpha}))=\sum_{1+y\leq x\leq P+y}e(\psi(x-y;\bm{\alpha})).
\end{equation}

Let \begin{equation*}
\mathcal{L}(\gamma)=\sum_{1\leq z\leq P}e(-\gamma z),
\end{equation*}
and
\begin{equation*}
\mathfrak{g}_y(\bm{\alpha};\gamma)=\sum_{1\leq x\leq 2P} e(\psi(x-y;\bm{\alpha})+\gamma(x-y)),
\end{equation*}
so that
\begin{equation*}
\overline{\mathfrak{g}}_y(\bm{\alpha};\gamma)=\mathfrak{g}_y(\bm{-\alpha};-\gamma).
\end{equation*}

Then, for $1\leq y\leq P$, we observe from (\ref{eq: psishift}) that
\begin{align*}
\int_0^1 \mathfrak{g}_y(\bm{\alpha};\gamma)\mathcal{L}(\gamma)\,d\gamma &=\int_0^1 \sum_{1\leq x\leq 2P}\sum_{1\leq z\leq P} e(\psi(x-y;\bm{\alpha})+\gamma(x-y-z))\,d\gamma\\
&=\sum_{1\leq x\leq 2P}\sum_{\substack{1\leq z\leq P\\
					z=x-y}
}e(\psi(x-y;\bm{\alpha}))\\
&=g(\bm{\alpha}).
\end{align*}

Substituting this relation into (\ref{Fint}), we find that
\begin{equation*}
\mathcal{I}= \sum_{\left|h\right|\leq sP^{k-1}} \int_{\mathfrak{v}}\oint\left|\int_0^1 \mathfrak{g}_y(\bm{\alpha},\mu;\gamma)\mathcal{L}(\gamma)\,d\gamma\right|^{2s}e(-\alpha_{k-1}h)\left|K_\pm(\mu)\right|\,d\bm{\alpha}\,d\mu.
\end{equation*}
Writing
\begin{equation*}
\mathcal{G}_y(\bm{\alpha},\mu;\bm{\gamma})=\prod_{i=1}^s \mathfrak{g}_y(\bm{\alpha},\mu;\gamma_i)\overline{\mathfrak{g}}_y(\bm{\alpha},\mu;\gamma_{s+i}),
\end{equation*}
and
\begin{equation*}
\tilde{\mathcal{L}}(\bm{\gamma})=\prod_{i=1}^s\mathcal{L}(\gamma_i)\mathcal{L}(-\gamma_{s+i}),
\end{equation*}
we see that
\begin{equation*}
\mathcal{I}=\sum_{\left|h\right|\leq sP^{k-1}} \int_{\mathfrak{v}}\oint\oint\mathcal{G}_y(\bm{\alpha},\mu;\bm{\gamma})\tilde{\mathcal{L}}(\bm{\gamma})e(-\alpha_{k-1}h)\left|K_\pm(\mu)\right|\,d\bm{\gamma}\,d\bm{\alpha}\,d\mu.
\end{equation*}

If we let \begin{equation} \label{eq: defIh}
I_h(\bm{\gamma},y)=\int_{\mathfrak{v}}\oint\mathcal{G}_y(\bm{\alpha},\mu;\bm{\gamma})e(-\alpha_{k-1}h)\left|K_\pm(\mu)\right|\,d\bm{\alpha}\,d\mu,
\end{equation}
then we can write \begin{equation} \label{eq: eighteen}
\mathcal{I}= \sum_{\left|h\right|\leq sP^{k-1}} \oint I_h(\bm{\gamma},y)\tilde{\mathcal{L}}(\bm{\gamma})\,d\bm{\gamma}.
\end{equation}

Evaluating the inner integral of $I_h(\bm{\gamma},y)$ using orthogonality, we see that
\begin{equation} \label{eq: deltasum}
\oint\mathcal{G}_y(\bm{\alpha},\mu;\bm{\gamma})e(-\alpha_{k-1}h)\left|K_\pm(\mu)\right|\,d\bm{\alpha} = \left|K_\pm(\mu)\right|\sum_{1\leq \mathbf{x}\leq 2P}\Delta_{\bm{x}}(\mu,\bm{\gamma},h,y),
\end{equation}
where \begin{equation*}
\Delta_{\bm{x}}(\mu,\bm{\gamma},h,y)= e(\mu\sigma_{s,k}(\mathbf{x}-y)+\sum_{i=1}^s (\gamma_i(x_i-y)-\gamma_{s+i}(x_{s+i}-y)))
\end{equation*}
whenever \begin{equation} \label{eq: deltacond}
\begin{cases}
\sigma_{s,j}(\mathbf{x}-y)=0,\;\;\; (1\leq j\leq k-2),\\
\sigma_{s,k-1}(\mathbf{x}-y)=h,
\end{cases}
\end{equation}
and otherwise $\Delta_{\bm{x}}(\mu,\bm{\gamma},h,y)=0$.

Using binomial expansions, we see that whenever the above conditions (\ref{eq: deltacond}) hold, we also have the relations
\begin{equation*}
\sigma_{s,j}(\mathbf{x})=0=\sigma_{s,j}(\mathbf{x}-\theta),
\end{equation*}
for $1\leq j\leq k-2$, and
\begin{equation*}
\sigma_{s,k-1}(\mathbf{x})=h=\sigma_{s,k-1}(\mathbf{x}-\theta),
\end{equation*}
and consequently
\begin{equation*}
\sigma_{s,k}(\mathbf{x}-y)= \sum_{i=1}^s ((x_i-\theta-y)^k-(x_{s+i}-\theta-y)^k) = \sigma_{s,k}(\mathbf{x})-khy.
\end{equation*}

We therefore see from (\ref{eq: deltasum}) that \begin{align*}
\oint\mathcal{G}_y(\bm{\alpha},\mu;\bm{\gamma})&e(-\alpha_{k-1}h)\left|K_\pm(\mu)\right|\,d\bm{\alpha} \\
&\leq \oint \left|K_\pm(\mu)\right|\mathcal{G}_0(\bm{\alpha},\mu;\bm{\gamma})e(-\mu khy-\alpha_{k-1}h)\,\omega_{y,\bm{\gamma}}\,d\bm{\alpha},
\end{align*}
where $\omega_{y,\bm{\gamma}} = e(-y\sigma_{s,1}(\bm{\gamma}))$.
Using (\ref{eq: defIh}), we have
\begin{align*}
\sum_{\left|h\right|\leq sP^{k-1}}&I_h(\bm{\gamma},y)\\
&\leq \int_{\mathfrak{v}}\oint \left|K_\pm(\mu)\right|\mathcal{G}_0(\bm{\alpha},\mu;\bm{\gamma})\sum_{\left|h\right|\leq sP^{k-1}}e(-\mu khy-\alpha_{k-1}h)\,\omega_{y,\bm{\gamma}}\,d\bm{\alpha}\,d\mu\\
&\ll \int_{\mathfrak{v}}\oint \left|K_\pm(\mu)\right|\left|\mathcal{G}_0(\bm{\alpha},\mu;\bm{\gamma})\right| \min\{P^{k-1}, \|\mu ky+\alpha_{k-1}\|^{-1}\}\,\,d\bm{\alpha}\,d\mu
\end{align*}
by a standard estimate for exponential sums (see, for example, \cite[Chapter 3]{davenport}).

Averaging over all permitted values of $y$, and writing
\begin{equation*}
\Psi(\mu,\alpha_{k-1})=P^{-1}\sum_{1\leq y\leq P}\min\{P^{k-1}, \|\mu ky+\alpha_{k-1}\|^{-1}\},
\end{equation*}
we see that \begin{align} \label{eq: average}
P^{-1}\sum_{1\leq y\leq P}\sum_{\left|h\right|\leq sP^{k-1}}& I_h(\bm{\gamma},y)\nonumber\\
&\ll \int_{\mathfrak{v}}\oint \left|K_\pm(\mu)\right|\left|\mathcal{G}_0(\bm{\alpha},\mu;\bm{\gamma})\right| \Psi(\mu,\alpha_{k-1})\,\,d\bm{\alpha}\,d\mu. 
\end{align}

Now we find a rational approximation for $\mu$. By Dirichlet's approximation theorem, there exist $b\in\mathbb{Z}$ and $r\in\mathbb{N}$ with $(b,r)=1$ such that $r\leq P^k Q^{-1}$ and $\left|r\mu-b\right|\leq QP^{-k}\leq r^{-1}$. Using a modification of \cite[Lemma 3.2]{baker}, we have
\begin{equation*}
\Psi(\mu,\alpha_{k-1}) \ll P^{k-1}(P^{-1}+r^{-1}+rP^{-k})\log(2r).
\end{equation*}
By the definition of $\mathfrak{v}$, we have $r>Q$, and therefore \begin{equation*}
\sup_{\mu\in\mathfrak{v}} \Psi(\mu,\alpha_{k-1}) \ll Q^{-1}P^{k-1}\log{P}.
\end{equation*}

Substituting this into (\ref{eq: average}) and using H\"{o}lder's inequality, we see that \begin{align} \label{Ups}
\quad\quad P^{-1}&\sum_{1\leq y\leq P}\sum_{\left|h\right|\leq sP^{k-1}} I_h(\bm{\gamma},y)\nonumber\\ 
&\ll Q^{-1}P^{k-1}(\log{P})\int_{\mathfrak{v}}\oint \left|K_\pm(\mu)\right|\left|\prod_{i=1}^s \mathfrak{g}_0(\bm{\alpha},\mu;\gamma_i)\overline{\mathfrak{g}}_0(\bm{\alpha},\mu;\gamma_{s+i})\right| \,d\bm{\alpha}\,d\mu\nonumber\\
&\ll Q^{-1}P^{k-1}(\log{P})\prod_{i=1}^{2s}\Bigg(\int_{\mathfrak{v}}\oint \left|K_\pm(\mu)\right|\left|\mathfrak{g}_0(\bm{\alpha},\mu;\gamma_i)\right|^{2s} \,d\bm{\alpha}\,d\mu\Bigg)^{1/2s}\nonumber\\
&\ll Q^{-1}P^{k-1}(\log{P})\sup_{\gamma\in[0,1)}\int_{\mathbb{R}}\oint \left|K_\pm(\mu)\right|\left|\mathfrak{g}_0(\bm{\alpha},\mu;\gamma)\right|^{2s} \,d\bm{\alpha}\,d\mu\nonumber\\
&\ll Q^{-1}P^{k-1}(\log{P})\int_{\mathbb{R}}\oint \left|K_\pm(\mu)\right|\left|g_k(\bm{\alpha},\mu,\theta;2P)\right|^{2s} \,d\bm{\alpha}\,d\mu.
\end{align}

For a general function $H\colon\mathbb{R}\to\mathbb{R}$, we write
\begin{equation*}
\Upsilon(H)=\int_{\mathbb{R}}\left|H(\mu)\right|\left|g_k(\bm{\alpha},\mu,\theta;2P)\right|^{2s} \,d\mu.
\end{equation*}
Using the Cauchy--Schwarz inequality, and the decomposition (\ref{Kdecomp}), we obtain
\begin{equation*}
\oint\Upsilon(K_\pm)\,d\bm{\alpha} \leq \bigg(\oint\Upsilon(K_1)\,d\bm{\alpha}\bigg)^{1/2} \bigg(\oint\Upsilon(K_2^\pm)\,d\bm{\alpha}\bigg) ^{1/2}.
\end{equation*}
From (\ref{K1est}), we deduce that $\Upsilon(K_1)$ contributes
\begin{equation*}
\delta^{-1}(1-\delta^{-1}\left|\sigma_{s,k}(\bm{x})\right|)\,e(\alpha_1\sigma_{s,1}(\bm{x})+\dotsc+\alpha_{k-1}\sigma_{s,k-1}(\bm{x}))
\end{equation*}
whenever $\left|\sigma_{s,k}(\bm{x})\right|<\delta$. Recalling that $\delta=\eta L(P)^{-1}\leq\eta$ for sufficiently large $P$, and using the equivalence of systems (\ref{Jskshift}) and (\ref{Jsknormal}), this implies that
\begin{equation*}
\oint\Upsilon(K_1)\,d\bm{\alpha} \leq \delta^{-1}J_{s,k}(2P) \ll L(P)J_{s,k}(2P).
\end{equation*}
Similarly, using (\ref{K2est}), we have
\begin{equation*}
\oint\Upsilon(K_2^\pm)\,d\bm{\alpha} \ll J_{s,k}(2P).
\end{equation*}
We remark that we also have $\oint\Upsilon(K)\,d\bm{\alpha} \ll J_{s,k}(2P)$, which allows us to establish the simplified claim (\ref{msimp}) given in the introduction to this paper.

Substituting the above estimates into (\ref{Ups}) and using (\ref{LPdefn}), we see that
\begin{equation*}
P^{-1}\sum_{1\leq y\leq P}\sum_{\left|h\right|\leq sP^{k-1}} I_h(\bm{\gamma},y) \ll Q^{-1}P^{k-1}(\log{P})^{3/2}\,J_{s,k}(2P).
\end{equation*}
Returning to (\ref{eq: eighteen}), and noting that $\mathcal{I}$ as originally defined in (\ref{Iindepy}) does not depend on $y$, we see that
\begin{equation}\label{eq: nearly}
\mathcal{I} = P^{-1}\sum_{1\leq y\leq P}\mathcal{I}\ll Q^{-1}P^{k-1}(\log{P})^{3/2}\,J_{s,k}(2P) \oint\left|\tilde{\mathcal{L}}(\bm{\gamma})\right|\,d\bm{\gamma}. 
\end{equation}
By the definition of $\mathcal{L}(\gamma)$, we have
\begin{equation*}
\int_0^1 \left|\mathcal{L}(\gamma)\right|\,d\gamma \leq \int_0^1 \min\{P,\|\gamma\|^{-1}\}\,d\gamma \ll \log{P},
\end{equation*}
and therefore
\begin{equation*}
\oint\left|\tilde{\mathcal{L}}(\bm{\gamma})\right|\,d\bm{\gamma}=\oint\left|\prod_{i=1}^s\mathcal{L}(\gamma_i)\mathcal{L}(-\gamma_{s+i})\right|\,d\bm{\gamma} \ll (\log{P})^{2s}.
\end{equation*}
Substituting this into (\ref{eq: nearly}), we see that
\begin{equation*}
\mathcal{I} \ll Q^{-1}P^{k-1}(\log{P})^{2s+3/2}\,J_{s,k}(2P),
\end{equation*}
and hence, from (\ref{eq: 1stsplit}), that
\begin{align*}
I^\pm(\mathfrak{v})&\ll Q^{-1}P^{\frac{1}{2}k(k-1)}(\log{P})^{2s+3/2}\,J_{s,k}(2P)\\
&\ll Q^{-1}P^{\frac{1}{2}k(k-1)+\epsilon}\,J_{s,k}(2P).
\end{align*}

Using (\ref{VMVT}), we conclude that
\begin{equation*}
I^\pm(\mathfrak{v}) \ll  P^{\epsilon}Q^{-1}(P^{s+\frac{1}{2}k(k-1)}+P^{2s-k}),
\end{equation*}\\
as required.
\end{proof}

In particular, we have 
\begin{equation*}
\int_{\mathfrak{v}} \left|f_\theta(\mu)\right|^{2s}\left|K_\pm(\mu)\right|\,d\mu \ll  Q^{-1}P^{s+\frac{1}{2}k(k-1)+\epsilon}
\end{equation*}
whenever $s\leq\tfrac{1}{2}k(k+1)$, and

\begin{equation*}
\int_{\mathfrak{v}} \left|f_\theta(\mu)\right|^{2s}\left|K_\pm(\mu)\right|\,d\mu \ll Q^{-1} P^{2s-k+\epsilon}
\end{equation*}
whenever $s\geq \tfrac{1}{2}k(k+1)$.

We now wish to use the above result to bound the minor arc contribution for our shifted Waring's problem. From this point onwards, we fix $Q=(2k)^{-1}P^{1/4}$.

In Corollary \ref{altcorollary}, we use the Cauchy--Schwarz inequality and a trivial estimate in order to limit the number of variables needed to achieve the required bound, which ultimately allows us to prove Theorem \ref{mainThm} (in Section \ref{majorarc}). We then go on to provide a conjectural further improvement (for $k=10$ and $k\geq 12$) based on an adaptation of a theorem of Bourgain (arising from the results in \cite{BDG}).

\begin{corollary} \label{altcorollary}
Let $k\geq 2$ be a natural number, and let $s_0(k)=k^2+(3k-1)/4$. Then for any natural number $s\geq s_0(k)$, we have
\begin{equation} \label{minorestALT}
\int_{\mathfrak{v}}\left|f_\theta(\alpha)\right|^s\left|K_\pm(\alpha)\right|\,d\alpha =o(P^{s-k}).
\end{equation}
\end{corollary}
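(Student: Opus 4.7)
My plan is to apply Hölder's inequality to reduce $\int_{\mathfrak{v}}|f_\theta|^s|K_\pm|$ to an integral involving an even power of $f_\theta$ to which the sharper form of Theorem~\ref{minorarcs} applies. Let $r$ be the smallest non-negative integer such that $s+r$ is even and $s+r \geq k(k+1)$, and set $2t = s+r$. Since $k(k+1) - s_0(k) = (k+1)/4$, one has $r \leq \lceil(k+1)/4\rceil$ whenever $s \geq s_0(k)$, so the amount of Hölder adjustment required is small.

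The key step is Hölder's inequality with conjugate exponents $2t/s$ and $2t/r$, which yields
\begin{equation*}
\int_{\mathfrak{v}}|f_\theta(\alpha)|^s|K_\pm(\alpha)|\,d\alpha \leq \Bigl(\int_{\mathfrak{v}}|f_\theta(\alpha)|^{2t}|K_\pm(\alpha)|\,d\alpha\Bigr)^{s/(2t)}\Bigl(\int_{\mathfrak{v}}|K_\pm(\alpha)|\,d\alpha\Bigr)^{r/(2t)}.
\end{equation*}
This is where ``Cauchy--Schwarz and the trivial estimate'' meet: the first factor is bounded using Theorem~\ref{minorarcs} in its sharper regime (the term $P^{2t-k}$ dominating $P^{t+k(k-1)/2}$ since $t \geq k(k+1)/2$), yielding a bound of $P^\epsilon Q^{-1} P^{2t-k}$, while the second factor is controlled trivially via $\int_{\mathbb{R}}|K_\pm(\alpha)|\,d\alpha \ll L(P)^{1/2} \ll P^\epsilon$ from~(\ref{K+-bound}).

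Combining these estimates and substituting $Q = (2k)^{-1}P^{1/4}$ gives a bound of the form $P^\epsilon P^{s - s(4k+1)/(8t)}$, which is $o(P^{s-k})$ precisely when $s(4k+1)/(8t) > k$; after rearrangement, this reduces to the single arithmetic condition $s > 4kr$. Using $r \leq (k+1)/4$ and the explicit form $s_0(k) = k^2 + (3k-1)/4$, one verifies this inequality holds for $s \geq s_0(k)$ by direct computation. The main technical obstacle I expect is the sharpness of the Hölder step: the slack in the inequality $s > 4kr$ is thin, and verifying it at the boundary value $s = \lceil s_0(k)\rceil$ requires the choice of $r$ to be made carefully and the constants in Theorem~\ref{minorarcs} to be tracked precisely.
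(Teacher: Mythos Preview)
Your approach has a genuine gap at the boundary value $s=\lceil s_0(k)\rceil$ when $k\equiv 3\pmod 4$. Take $k=7$: here $s_0(7)=54$ is an integer, $k(k+1)=56$, so your construction forces $r=2$ and hence $4kr=56>54=s$. The inequality $s>4kr$ that you need therefore fails. The same happens for $k=3,11,15,\dotsc$; in each of these cases $s_0(k)$ is an integer, $k(k+1)-s_0(k)=(k+1)/4$ is the minimal $r$, and one computes $4kr=k(k+1)>s_0(k)$. So the claim ``one verifies this inequality holds for $s\geq s_0(k)$ by direct computation'' is simply false for these $k$, and the obstacle you flagged as ``thin slack'' is in fact fatal.

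The underlying reason is that your H\"older splitting is too wasteful: pairing $|f_\theta|^s$ against the constant function $1$ discards all arithmetic information in the auxiliary factor. The paper's proof avoids this by taking the second factor to be $I^\pm_1(\mathfrak{v})=\int_{\mathfrak{v}}|f_\theta|^2|K_\pm|\,d\alpha$ rather than $\int_{\mathfrak{v}}|K_\pm|\,d\alpha$; the diagonal count gives $I^\pm_1(\mathfrak{v})\ll P^{1+\epsilon}$, and this extra factor of $P$ in the second term (versus your $P^\epsilon$) is precisely what rescues the boundary cases. Concretely, with H\"older weights $a,b$ chosen so that $ak(k+1)+2b=s_0$ and $a+b=1$, one obtains an exponent deficit $\iota=a/4-b(k-1)$, and the inequality $b<1/(4k-3)$ (which follows from the definition of $s_0$) guarantees $\iota>0$ for every $k\geq 2$. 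Your decomposition cannot reach $s_0(k)$ for all $k$; to repair it you must put at least two copies of $f_\theta$ into the second H\"older factor.
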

\begin{proof}
Fix $k\geq 2$, and let $s_0=s_0(k)$. We first prove (\ref{minorestALT}) in the case $s=s_0$.
Let \begin{equation*}
a=\frac{s_0-2}{(k+2)(k-1)}, \quad b=\frac{k^2+k-s_0}{(k+2)(k-1)}.
\end{equation*}
Note that by the definition of $s_0$, and since $k>5/3$, we have
\begin{equation} \label{4k-3}
b= \frac{k+1}{4k^2+4k-8} <\frac{k+1}{4k^2+k-3}=\frac{1}{4k-3}.
\end{equation}
We have $a+b=1$, and $ak(k+1)+2b=s_0$, so, using the notation introduced in (\ref{Ipm}), and suppressing the dependence on $k, P$ and $\theta$, we can apply H\"older's inequality to see that
\begin{equation*}
\int_{\mathfrak{v}}\left|f_\theta(\alpha)\right|^{s_0}\left|K_\pm(\alpha)\right|\,d\alpha \ll \big(I^\pm_{k(k+1)/2}(\mathfrak{v})\big)^a \big(I^\pm_1(\mathfrak{v})\big)^b.
\end{equation*}

We evaluate the first term using Theorem \ref{minorarcs} to get
\begin{equation*}
I^\pm_{k(k+1)/2}(\mathfrak{v}) \ll Q^{-1}P^{k(k+1)-k+\epsilon}.
\end{equation*}
For the second term, we use the decomposition (\ref{Kdecomp}), along with the Cauchy--Schwarz inequality, to obtain
\begin{equation*}
I^\pm_1(\mathfrak{v}) \ll \bigg(\int_{\mathfrak{v}}\left|f_\theta(\alpha)\right|^2 K_1(\alpha)\,d\alpha\bigg)^{1/2} \bigg(\int_{\mathfrak{v}}\left|f_\theta(\alpha)\right|^2 K^\pm_2(\alpha)\,d\alpha\bigg)^{1/2}.
\end{equation*}

Since the number of solutions to the inequality $\left|(x-\theta)^k-(y-\theta)^k\right|<\delta$ with $1\leq x,y\leq P$ is $O(P)$, we use (\ref{K1est}) and (\ref{LPdefn}) to see that
\begin{equation*}
\int_{\mathfrak{v}}\left|f_\theta(\alpha)\right|^2 K_1(\alpha)\,d\alpha \leq \int_{\mathbb{R}}\left|f_\theta(\alpha)\right|^2 K_1(\alpha)\,d\alpha \ll L(P)P.
\end{equation*}
Similarly, using (\ref{K2est}), we have
\begin{equation*}
\int_{\mathfrak{v}}\left|f_\theta(\alpha)\right|^2 K^\pm_2(\alpha)\,d\alpha \leq \int_{\mathbb{R}}\left|f_\theta(\alpha)\right|^2 K^\pm_2(\alpha)\,d\alpha \ll P.
\end{equation*}
We therefore see that
\begin{equation*}
I^\pm_1(\mathfrak{v})  \ll (\log{P})^{1/2}P \ll P^{1+\epsilon}.
\end{equation*}

Hence, with some rearrangement, and using the definitions of $a$, $b$ and $Q$,
\begin{align*}
\int_{\mathfrak{v}}\left|f_\theta(\alpha)\right|^{s_0}\left|K_\pm(\alpha)\right|\,d\alpha &\ll P^{a(k(k+1)-k-1/4+\epsilon)+b(1+\epsilon)}\\
&=P^{s_0-k+\epsilon-\iota},
\end{align*}
where 
\begin{equation*}
\iota=a/4-b(k-1)=1/4-b(k-3/4)>\epsilon
\end{equation*}
for small enough $\epsilon$, by (\ref{4k-3}).

For $s>s_0$, we then use the trivial estimate to obtain
\begin{align*}
\int_{\mathfrak{v}}\left|f_\theta(\alpha)\right|^s\left|K_\pm(\alpha)\right|\,d\alpha &\ll P^{s-s_0}\int_{\mathfrak{v}}\left|f_\theta(\alpha)\right|^{s_0}\left|K_\pm(\alpha)\right|\,d\alpha\\
&\ll P^{s-s_0}P^{s_0-k+\epsilon-\iota}=o(P^{s-k}).
\end{align*}
\end{proof}

We now present a more sophisticated version of the above argument, which follows a similar structure. For $j<k$ a natural number, we define
\begin{align*}
s_1(k,j)&=\Bigg\lceil k(k+1)-\frac{k(k+1)-j(j+1)}{4(k-j)+1} \Bigg\rceil +1\\
&=k^2+k+1-\Bigg\lfloor\frac{k(k+1)-j(j+1)}{4(k-j)+1} \Bigg\rfloor.
\end{align*}

We require an improved version of Hua's lemma. Since \cite[Theorem 4.1]{BDG} applies equally to the case of exponential sums of suitably separated points, such as the set $\{x-\theta:x\in\mathbb{N}\}$, as it does to the integer case, it would seem that the following `shifted' analogue of \cite[Theorem 10]{bourgainexpl} should hold. However, the details of such a result do not yet appear in the literature.

\begin{hypothesis}[``Shifted Hua's Lemma'']
For $j\leq k$ a natural number, and for any fixed, positive $\zeta$, we have
\begin{equation} \label{conj}
\int_{\mathbb{R}}\left|f_\theta(\alpha)\right|^{j(j+1)}K(\alpha;\zeta)\,d\alpha \ll P^{j^2+\epsilon}.
\end{equation}
\end{hypothesis}
Note that the implicit constant in (\ref{conj}) may depend on $\zeta$.

\begin{corollary} \label{corollary}
Assuming the shifted Hua's lemma, for any natural number $s\geq s_1(k,j)$ we have
\begin{equation} \label{minorest}
\int_{\mathfrak{v}}\left|f_\theta(\alpha)\right|^s\left|K_\pm(\alpha)\right|\,d\alpha =o(P^{s-k}).
\end{equation}
\end{corollary}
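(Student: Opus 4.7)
The plan is to mirror the structure of Corollary \ref{altcorollary}, replacing the trivial second-moment input (at exponent $2$) by the stronger shifted Hua's lemma at exponent $j(j+1)$. Thus for $s=s_1(k,j)$ I would choose exponents $a,b\geq 0$ with $a+b=1$ and $ak(k+1)+bj(j+1)=s$, which is possible whenever $j(j+1)\leq s\leq k(k+1)$; explicitly $b=(k(k+1)-s)/(k(k+1)-j(j+1))$. The regime $s\geq k(k+1)$ already follows from Theorem \ref{minorarcs} combined with the trivial bound $|f_\theta|\leq P$, so the substantive range to handle is $s_1(k,j)\leq s\leq k(k+1)$.

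In this range, for $s=s_1(k,j)$, H\"older's inequality would give
\begin{equation*}
\int_{\mathfrak{v}} |f_\theta(\alpha)|^s |K_\pm(\alpha)|\,d\alpha \leq \bigl(I^\pm_{k(k+1)/2}(\mathfrak{v})\bigr)^a \bigl(I^\pm_{j(j+1)/2}(\mathfrak{v})\bigr)^b.
\end{equation*}
Theorem \ref{minorarcs} bounds the first factor by $Q^{-1}P^{k(k+1)-k+\epsilon}$ (the two terms there coincide at the threshold $s=k(k+1)/2$). For the second factor I would apply the decomposition $|K_\pm|^2=K_1K_2^\pm$ from (\ref{Kdecomp}) followed by Cauchy--Schwarz, exactly as in the proof of Corollary \ref{altcorollary}, and invoke the shifted Hua's lemma on each of the two pieces. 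Since $K_1=\delta^{-1}K(\alpha;\delta)$ and $K_2^\pm=(2\eta\pm\delta)K(\alpha;2\eta\pm\delta)$, both are of the form required by the hypothesis (with $\zeta=\delta$ and $\zeta=2\eta\pm\delta$ respectively), yielding $I^\pm_{j(j+1)/2}(\mathfrak{v})\ll P^{j^2+\epsilon}$ after the $\delta^{-1}$ factor is absorbed into $P^\epsilon$ via (\ref{LPdefn}).

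Combining these bounds with $Q\asymp P^{1/4}$ produces a total exponent of $a(k(k+1)-k-1/4)+bj^2+\epsilon$. A short calculation parallel to the one at the end of Corollary \ref{altcorollary} then shows that
\begin{equation*}
(s-k)-\Bigl(a\bigl(k(k+1)-k-\tfrac14\bigr)+bj^2\Bigr)=\tfrac14-b\bigl(k-j+\tfrac14\bigr),
\end{equation*}
so that, writing $\iota=\tfrac14-b(k-j+\tfrac14)$, the claimed saving is $\iota>\epsilon$. Unwinding the formula for $b$, the condition $\iota>0$ is exactly $s>k(k+1)-(k(k+1)-j(j+1))/(4(k-j)+1)$, which by the ceiling-plus-one in the definition of $s_1(k,j)$ is guaranteed with a definite positive gap whenever $s\geq s_1(k,j)$, leaving room to absorb the $\epsilon$ losses. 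For $s>s_1(k,j)$ I would then use the trivial estimate $|f_\theta|\leq P$ to factor out $P^{s-s_1(k,j)}$ and reduce to the case just treated.

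The main obstacle is essentially bookkeeping rather than anything conceptual: verifying the arithmetic identity displayed above, checking that the ceiling-plus-one in $s_1(k,j)$ translates to $b<1/(4(k-j)+1)$ with strict room to spare, and observing that the shifted Hua's lemma can be invoked uniformly against both $K_1$ and $K_2^\pm$ since each is a scalar multiple of a Davenport--Heilbronn kernel $K(\alpha;\zeta)$ with $\zeta$ bounded away from $0$ and $\infty$ (up to the mild $\delta^{-1}$ factor handled above).
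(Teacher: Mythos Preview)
Your proposal is correct and follows essentially the same route as the paper's proof: the same H\"older weights $a,b$, the same application of Theorem~\ref{minorarcs} to the first factor, the same Cauchy--Schwarz decomposition via (\ref{Kdecomp}) combined with the shifted Hua's lemma for the second factor, the same computation of $\iota=\tfrac14-b(k-j+\tfrac14)$, and the same trivial extension to $s>s_1(k,j)$. The only (minor) addition is your explicit remark that the range $s\geq k(k+1)$ can be handled directly from Theorem~\ref{minorarcs}, which the paper does not isolate since it works only at $s=s_1(k,j)$ and then extends trivially.
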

\begin{proof}
Fix $j$ and $k$, and let $s_1=s_1(k,j)$. We first prove (\ref{minorest}) in the case $s=s_1$.
Let \begin{equation*}
a=\frac{s_1-j(j+1)}{k(k+1)-j(j+1)}, \,\,b=\frac{k(k+1)-s_1}{k(k+1)-j(j+1)}.
\end{equation*}
Note that by the definition of $s_1$ we have
\begin{equation} \label{bsmall}
b= \frac{k(k+1)-\Big\lceil k(k+1)-\frac{k(k+1)-j(j+1)}{4(k-j)+1} \Big\rceil-1}{k(k+1)-j(j+1)} <\frac{1}{4(k-j)+1}.
\end{equation}
We have $a+b=1$, and $ak(k+1)+bj(j+1)=s_1$, so, as in Corollary \ref{altcorollary}, we can apply H\"older's inequality to see that
\begin{equation*}
\int_{\mathfrak{v}}\left|f_\theta(\alpha)\right|^{s_1}\left|K_\pm(\alpha)\right|\,d\alpha \ll
\big(I^\pm_{k(k+1)/2}(\mathfrak{v})\big)^a \big(I^\pm_{j(j+1)/2}(\mathfrak{v})\big)^b.
\end{equation*}

We evaluate the first term using Theorem \ref{minorarcs} to get
\begin{equation*}
I^\pm_{k(k+1)/2}(\mathfrak{v}) \ll Q^{-1}P^{k(k+1)-k+\epsilon}.
\end{equation*}
For the second term, as in Corollary \ref{altcorollary}, we obtain
\begin{align*}
I^\pm_{j(j+1)/2}(\mathfrak{v}) &\ll \bigg(\int_{\mathfrak{v}}\left|f_\theta(\alpha)\right|^{j(j+1)} K_1(\alpha)\,d\alpha\bigg)^{1/2} \bigg(\int_{\mathfrak{v}}\left|f_\theta(\alpha)\right|^{j(j+1)} K^\pm_2(\alpha)\,d\alpha\bigg)^{1/2} \\
&\ll \bigg(\int_{\mathbb{R}}\left|f_\theta(\alpha)\right|^{j(j+1)} K_1(\alpha)\,d\alpha\bigg)^{1/2} \bigg(\int_{\mathbb{R}}\left|f_\theta(\alpha)\right|^{j(j+1)} K^\pm_2(\alpha)\,d\alpha\bigg)^{1/2}.
\end{align*}

Combining (\ref{reallyK1}) and (\ref{reallyK2}) with the shifted Hua's lemma, we see that
\begin{equation*}
\int_{\mathbb{R}}\left|f_\theta(\alpha)\right|^{j(j+1)} K_1(\alpha)\,d\alpha \ll L(P)P^{j^2+\epsilon} \ll P^{j^2+\epsilon},
\end{equation*}
and
\begin{equation*}
\int_{\mathbb{R}}\left|f_\theta(\alpha)\right|^{j(j+1)} K^\pm_2(\alpha)\,d\alpha \ll P^{j^2+\epsilon},
\end{equation*}
and therefore that
\begin{equation*}
I^\pm_{j(j+1)/2}(\mathfrak{v})\ll P^{j^2+\epsilon}.
\end{equation*}

Hence, with some rearrangement, and using the definitions of $a$, $b$ and $Q$,
\begin{align*}
\int_{\mathfrak{v}}\left|f_\theta(\alpha)\right|^{s_1}\left|K_\pm(\alpha)\right|\,d\alpha &\ll P^{(k(k+1)-k+\epsilon-1/4)a+(j(j+1)-j+\epsilon)b}\\
&=P^{s_1-k+\epsilon-\iota}
\end{align*}
where $\iota=1/4-(k-j+1/4)b>\epsilon$ for small enough $\epsilon$, by (\ref{bsmall}).

For $s>s_1$, we then use the trivial estimate to obtain
\begin{align*}
\int_{\mathfrak{v}}\left|f_\theta(\alpha)\right|^s\left|K_\pm(\alpha)\right|\,d\alpha &\ll P^{s-s_1}\int_{\mathfrak{v}}\left|f_\theta(\alpha)\right|^{s_1}\left|K_\pm(\alpha)\right|\,d\alpha\\
&\ll P^{s-s_1}P^{s_1-k+\epsilon-\iota}\\
&=o(P^{s-k}).
\end{align*}
\end{proof}

Differentiation shows that, for a given $k$, the minimal value of $s_1(k,j)$ occurs when
\begin{equation*}
j=j_0(k)=\Bigg[ k+\frac{1}{4}-\sqrt{\frac{1}{2}k+\frac{5}{16}}\Bigg]
\end{equation*}
where $[x]$ denotes the nearest integer to $x$. Note that for all $k\geq 2$, we have $j_0(k)<k$. Letting $s_1(k)=s_1(k,j_0(k))$, we therefore conclude the following corollary, noting that $s_1(k)=k^2+k/2+O(k^{1/2})$, and that $s_1(k)<s_0(k)$ for $k=10$ and $k\geq12$.
\begin{corollary} \label{thisone}
Assuming the shifted Hua's lemma, for any natural number $s\geq s_1(k)$, we have
\begin{equation*}
\int_{\mathfrak{v}}\left|f_\theta(\alpha)\right|^s\left|K_\pm(\alpha)\right|\,d\alpha =o(P^{s-k}).
\end{equation*}
\end{corollary}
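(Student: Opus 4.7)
The plan is to deduce this corollary from Corollary~\ref{corollary} by selecting the value of $j\in\{1,\dotsc,k-1\}$ which minimises $s_1(k,j)$. Since
\[s_1(k,j) = k^2 + k + 1 - \Big\lfloor\tfrac{k(k+1)-j(j+1)}{4(k-j)+1}\Big\rfloor,\]
the task reduces to maximising $g(j) := (k(k+1)-j(j+1))/(4(k-j)+1)$ over the integers in $[1,k)$. I would first treat $j$ as a continuous variable on $(0,k)$ and solve $g'(j)=0$: writing $g=N/D$ with $N(j) = k(k+1)-j(j+1)$ and $D(j) = 4(k-j)+1$, the equation $N'D - ND' = 0$ collapses, after expansion, to the quadratic $4j^2 - 2(4k+1)j + (4k^2-1) = 0$, whose discriminant equals $4(8k+5)$. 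The root lying in $(0,k)$ is $j^*(k) = k + 1/4 - \sqrt{k/2 + 5/16}$, and $0<j^*(k)<k$ for every $k\geq 2$ by a direct inequality check. Taking $j_0(k)$ to be the integer nearest $j^*(k)$ thus provides a legitimate input to Corollary~\ref{corollary}, and invoking that result with $j=j_0(k)$ yields the claim for all $s \geq s_1(k):=s_1(k,j_0(k))$.

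For the asymptotic $s_1(k) = k^2 + k/2 + O(k^{1/2})$, I would use the factorisation $k(k+1)-j(j+1) = (k-j)(k+j+1)$ together with $r := k - j^*(k) = \sqrt{k/2+5/16} - 1/4 = \sqrt{k/2} + O(k^{-1/2})$ to obtain
\[g(j^*(k)) = \frac{r(2k-r+1)}{4r+1} = \frac{k}{2} + O(\sqrt{k}).\]
Hence $s_1(k) = k^2 + k/2 + O(k^{1/2})$. Comparing with $s_0(k) = k^2 + (3k-1)/4$ gives $s_0(k) - s_1(k) = k/4 + O(\sqrt{k})$, which is positive for all sufficiently large $k$; the cases near the cutoff (in particular the failure at $k=11$ and the successes at $k=10$ and each $k\geq 12$ up to where the asymptotic takes over) are handled by direct numerical evaluation of $s_1(k,j)$ at the integers bracketing $j^*(k)$.

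The main obstacle is not analytic but combinatorial: one must manage the ceiling and floor functions carefully, verifying that rounding $j^*(k)$ to the nearest integer really does minimise the discrete function $s_1(k,j)$. Since $g$ is smooth and unimodal on $(0,k)$, this reduces to comparing the two integers flanking $j^*(k)$, where the floor in the definition of $s_1(k,j)$ may allow the discrete optimum to be attained at either. The non-monotonic pattern (the isolated failure at $k=11$) reflects precisely such a rounding artefact, which is why the statement carves out that single value.
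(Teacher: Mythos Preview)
Your proposal is correct and follows essentially the same route as the paper: the paper's argument (given in the paragraph immediately preceding the corollary) is simply to differentiate the real-variable function underlying $s_1(k,j)$, identify the critical point $j^*(k)=k+1/4-\sqrt{k/2+5/16}$, set $j_0(k)$ equal to its nearest integer, and invoke Corollary~\ref{corollary} with $j=j_0(k)$. You carry out precisely this plan, supplying the explicit quadratic and discriminant that the paper suppresses, and you are more careful than the paper about the discrete-versus-continuous rounding issue (the paper asserts without comment that the minimum of the integer-valued $s_1(k,j)$ is attained at the nearest integer to $j^*$).
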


Finally, we generalise the above results to the case of mixed shifts $\theta_1,\dotsc,\theta_s$.
\begin{corollary} \label{minormixed}
Suppose that $\theta_1,\dotsc,\theta_s\in(0,1)$, and write $\bm{\theta}=(\theta_1,\dotsc,\theta_s)$. Then for any natural number $s\geq s_0(k)$, we have
\begin{equation*}
\int_{\mathfrak{v}}\left|f_{\bm{\theta}}(\alpha)\right| \left|K_\pm(\alpha)\right|\,d\alpha = o(P^{s-k}).
\end{equation*}
Assuming the shifted Hua's lemma, the same result holds whenever $s\geq s_1(k)$.
\end{corollary}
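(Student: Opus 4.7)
The plan is to reduce the mixed-shift estimate to the single-shift estimate of Corollary \ref{altcorollary} (respectively Corollary \ref{thisone}) by a direct application of H\"older's inequality. Since by definition $f_{\bm{\theta}}(\alpha) = f_{\theta_1}(\alpha)\cdots f_{\theta_s}(\alpha)$, one has $|f_{\bm{\theta}}(\alpha)| = \prod_{i=1}^s |f_{\theta_i}(\alpha)|$, and the kernel weight $|K_\pm(\alpha)|$ can be split symmetrically among the $s$ factors.

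Concretely, I would write $|K_\pm(\alpha)| = \prod_{i=1}^s |K_\pm(\alpha)|^{1/s}$, so that
\begin{equation*}
|f_{\bm{\theta}}(\alpha)||K_\pm(\alpha)| = \prod_{i=1}^s \bigl(|f_{\theta_i}(\alpha)|\cdot |K_\pm(\alpha)|^{1/s}\bigr).
\end{equation*}
H\"older's inequality with $s$ factors, each with exponent $s$, then gives
\begin{equation*}
\int_{\mathfrak{v}} |f_{\bm{\theta}}(\alpha)||K_\pm(\alpha)|\,d\alpha \leq \prod_{i=1}^s \left(\int_{\mathfrak{v}} |f_{\theta_i}(\alpha)|^s |K_\pm(\alpha)|\,d\alpha\right)^{1/s}.
\end{equation*}
Each factor on the right-hand side involves only a single shift $\theta_i\in(0,1)$, and so is $o(P^{s-k})$ whenever $s\geq s_0(k)$ by Corollary \ref{altcorollary}, or whenever $s\geq s_1(k)$ under the shifted Hua hypothesis by Corollary \ref{thisone}. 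Taking the geometric mean of $s$ such quantities yields the stated bound.

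There is no substantive obstacle at this stage: the main point is that the single-shift estimates of Corollaries \ref{altcorollary} and \ref{thisone} are uniform in $\theta\in(0,1)$, so they apply to each $\theta_i$ separately with the same implied constants, and H\"older's inequality decouples the distinct shifts cleanly. All the analytic work has already been carried out in the proof of Theorem \ref{minorarcs} and Corollary \ref{altcorollary}; this final corollary is essentially a formal consequence.
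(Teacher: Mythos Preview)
Your proposal is correct and is exactly the approach the paper takes: apply H\"older's inequality with exponents $(s,\dotsc,s)$ to the product $\prod_{i=1}^s |f_{\theta_i}(\alpha)|$ against the weight $|K_\pm(\alpha)|$, and then invoke Corollary~\ref{altcorollary} (or Corollary~\ref{thisone}) for each single-shift factor. Your remark about uniformity in $\theta$ is the only point worth noting, and indeed the power saving $\iota$ in those corollaries depends only on $k$, so the geometric mean of the $s$ bounds is again $o(P^{s-k})$.
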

\begin{proof}
By H\"older's inequality, and using Corollary \ref{altcorollary} or Corollary \ref{thisone}, as appropriate, we have
\begin{align*}
\int_{\mathfrak{v}}\left|f_{\bm{\theta}}(\alpha)\right| \left|K_\pm(\alpha)\right|\,d\alpha &\ll \prod_{i=1}^s\Bigg(\int_{\mathfrak{v}}\left|f_{\theta_i}(\alpha)\right|^s \left|K_\pm(\alpha)\right|\,d\alpha\Bigg)^{1/s}\\
&=o(P^{s-k}).
\end{align*}
\end{proof}

\section{The minor and trivial arcs} \label{restofminor}
On the minor and trivial arcs, we first demonstrate the estimate
\begin{equation*}
\int_{\mathfrak{m}\cup\mathfrak{t}}\left|f_{\theta_1}(\alpha)f_{\theta_2}(\alpha)f_{\theta_3}(\alpha)^{s-2} K_\pm(\alpha)\right|\,d\alpha=o(P^{s-k}),
\end{equation*}
for shifts $\theta_1,\theta_2,\theta_3\in (0,1)$ with $\theta_1$ irrational, and later use H\"older's inequality to obtain the general case. We subdivide our arcs into those points with good rational approximations and those without, in a manner reminiscent of the classical Hardy--Littlewood method, by defining
\begin{gather*}
\mathfrak{N}_{a,q}=\{\alpha\in\mathfrak{m}\cup\mathfrak{t}:\left|q\alpha-a\right|\leq QP^{-k}\}, \\
\mathfrak{N}=\bigcup_{\substack{1\leq q\leq Q,\\
					(a,q)=1}
}\mathfrak{N}_{a,q},\quad\mbox{and \quad}\mathfrak{n}=(\mathfrak{m}\cup\mathfrak{t})\setminus\mathfrak{N}.
\end{gather*}

Those points in $\mathfrak{n}$ are handled using Corollary \ref{minormixed}, since $\mathfrak{n}=\mathfrak{v}\cap(\mathfrak{m}\cup\mathfrak{t})$, while those in $\mathfrak{N}$ are subdivided yet again on the basis of the size of the exponential sum $f_{\theta_3}(\alpha)$. For some real number $t$ (to be chosen later) satisfying $2k(k-1)t<1$, let
\begin{equation*}
\mathfrak{B}=\mathfrak{B}(t)=\{\alpha\in\mathfrak{N}:\left|f_{\theta_3}(\alpha)\right|\geq P^{1-t}\},
\end{equation*}
and
\begin{equation*}
\overline{\mathfrak{B}}=\overline{\mathfrak{B}}(t)=\mathfrak{N}\setminus\mathfrak{B},
\end{equation*}
so that \begin{equation*}
\mathfrak{m}\cup\mathfrak{t}=\mathfrak{B}\cup\overline{\mathfrak{B}}\cup\mathfrak{n}.
\end{equation*}

Let $\mathfrak{B}_v$, $\overline{\mathfrak{B}}_v$ denote  the intersection of $\mathfrak{B}$, $\overline{\mathfrak{B}}$ respectively with the unit interval $[v,1+v)$. Note that for any $v\in\mathbb{R}$, we have
\begin{align} \label{mesB}
\text{mes}(\mathfrak{B}_v\cup\overline{\mathfrak{B}}_v)\leq \sum_{q=1}^Q\sum_{a=1}^q 2QP^{-k}/q \ll Q^2 P^{-k}.
\end{align}
We use this to bound the contribution to the overall integral from $\overline{\mathfrak{B}}$.

\begin{lemma} \label{ulemma}
Let $t$ be such that $2k(k-1)t<1$. For $u>1/(2t)$, and for any $v\in\mathbb{R}$, we have
\begin{align*}
\int_{\overline{\mathfrak{B}}_v}\left|f_{\theta_3}(\alpha)\right|^u\,d\alpha =o(P^{u-k}).
\end{align*}
\end{lemma}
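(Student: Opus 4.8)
The plan is to combine the two defining properties of the set $\overline{\mathfrak{B}}$: the pointwise upper bound on $\left|f_{\theta_3}\right|$ built into its definition, and the inclusion $\overline{\mathfrak{B}}\subseteq\mathfrak{N}$, which forces its intersection with any unit interval to have small measure via (\ref{mesB}). Neither ingredient uses the hypothesis $2k(k-1)t<1$, which is present only because it forms part of the standing set-up in which $\mathfrak{B}$ and $\overline{\mathfrak{B}}$ are introduced; the hypothesis that actually does the work is $u>1/(2t)$.

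First I would observe that, since $\overline{\mathfrak{B}}_v\subseteq\overline{\mathfrak{B}}$, every point $\alpha\in\overline{\mathfrak{B}}_v$ satisfies $\left|f_{\theta_3}(\alpha)\right|<P^{1-t}$ by the definition of $\overline{\mathfrak{B}}$. Hence, for any $u>0$,
\[
\int_{\overline{\mathfrak{B}}_v}\left|f_{\theta_3}(\alpha)\right|^{u}\,d\alpha\leq P^{(1-t)u}\,\text{mes}(\overline{\mathfrak{B}}_v).
\]
Next, since $\overline{\mathfrak{B}}_v\subseteq\mathfrak{B}_v\cup\overline{\mathfrak{B}}_v$, the bound (\ref{mesB}) gives $\text{mes}(\overline{\mathfrak{B}}_v)\ll Q^2P^{-k}$, and recalling that $Q=(2k)^{-1}P^{1/4}$ this is $\ll P^{1/2-k}$.

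Combining these two estimates would give
\[
\int_{\overline{\mathfrak{B}}_v}\left|f_{\theta_3}(\alpha)\right|^{u}\,d\alpha\ll P^{(1-t)u+1/2-k}=P^{u-k}\cdot P^{1/2-tu},
\]
and since $u>1/(2t)$ we have $tu>1/2$, so the final factor $P^{1/2-tu}$ tends to $0$ as $P\to\infty$, yielding the claimed estimate $o(P^{u-k})$. There is no real obstacle here; the only point to check is that the exponents close, which they do precisely because the loss $P^{1/2}$ arising from $Q\asymp P^{1/4}$ in the measure bound for $\mathfrak{N}$ is outweighed by the saving $P^{tu}$ furnished by the pointwise bound defining $\overline{\mathfrak{B}}$ together with the constraint $u>1/(2t)$.
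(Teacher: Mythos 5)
Your proposal is correct and is essentially identical to the paper's own proof: both bound $\left|f_{\theta_3}(\alpha)\right|$ by $P^{1-t}$ using the definition of $\overline{\mathfrak{B}}$, apply the measure bound (\ref{mesB}) with $Q=(2k)^{-1}P^{1/4}$, and close the exponents via $ut>1/2$. Your remark that the hypothesis $2k(k-1)t<1$ plays no role in this particular lemma is also accurate.
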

\begin{proof}
Note that, by assumption, we have $ut>1/2$. Therefore, using (\ref{mesB}), and recalling that $Q=(2k)^{-1}P^{1/4}$,
\begin{align*}
\int_{\overline{\mathfrak{B}}_v}\left|f_{\theta_3}(\alpha)\right|^u\,d\alpha &\ll (P^{1-t})^u \text{mes}(\overline{\mathfrak{B}}_v)\ll P^{u-ut}Q^2P^{-k}\\
&\ll P^{u-k+1/2-ut}=o(P^{u-k}).
\end{align*}
\end{proof}
We therefore have the following estimate for those $\overline{\mathfrak{B}}_v$ contained in the minor arcs.
\begin{lemma} \label{slemma}
For $s\geq k^2+2$, and for any $v$ with $\overline{\mathfrak{B}}_v \subset \mathfrak{m}$, there exists $\iota>0$ such that
\begin{align*}
\int_{\overline{\mathfrak{B}}_v}\left|f_{\theta_1}(\alpha)f_{\theta_2}(\alpha)f_{\theta_3}(\alpha)^{s-2}\right|\,d\alpha &\ll P^{s-k-\iota}\,T(P)^{-1}.
\end{align*}
\end{lemma}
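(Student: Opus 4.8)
The plan is to handle the three exponential sums separately: the pair $f_{\theta_1}f_{\theta_2}$ via the pointwise estimate of Lemma~\ref{2.2}, and the power $f_{\theta_3}^{s-2}$ via the defining inequality of $\overline{\mathfrak{B}}$ together with the measure bound (\ref{mesB}).

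First I would observe that the hypothesis $\overline{\mathfrak{B}}_v\subset\mathfrak{m}$ means, by the definition of $\mathfrak{m}$, that every $\alpha$ in the range of integration satisfies $P^{\xi-k}\le|\alpha|\le T(P)$, so Lemma~\ref{2.2} (applicable since $\theta_1$ is irrational) gives
\[
\sup_{\alpha\in\overline{\mathfrak{B}}_v}\bigl|f_{\theta_1}(\alpha)f_{\theta_2}(\alpha)\bigr|\ll P^2\,T(P)^{-1}.
\]
Pulling this supremum out of the integral and using $|f_{\theta_3}(\alpha)|<P^{1-t}$ on $\overline{\mathfrak{B}}\supseteq\overline{\mathfrak{B}}_v$, together with $\mathrm{mes}(\overline{\mathfrak{B}}_v)\ll Q^2P^{-k}\ll P^{1/2-k}$ from (\ref{mesB}) and the choice $Q=(2k)^{-1}P^{1/4}$, one finds
\[
\int_{\overline{\mathfrak{B}}_v}\bigl|f_{\theta_1}(\alpha)f_{\theta_2}(\alpha)f_{\theta_3}(\alpha)^{s-2}\bigr|\,d\alpha\ll P^2T(P)^{-1}\cdot P^{(1-t)(s-2)}\cdot P^{1/2-k}=P^{\,s-k-\iota}\,T(P)^{-1},
\]
where $\iota=(s-2)t-\tfrac12$. (Equivalently, the $f_{\theta_3}$ integral appearing here is the power-saving form of Lemma~\ref{ulemma} with $u=s-2$.)

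It then remains only to check that $\iota>0$ can be arranged. Since $t$ is at our disposal subject to $2k(k-1)t<1$, I would additionally require $t>1/(2(s-2))$; this is consistent because the interval $\bigl(\tfrac{1}{2(s-2)},\tfrac{1}{2k(k-1)}\bigr)$ is non-empty, as $s\ge k^2+2$ forces $s-2\ge k^2>k(k-1)$. With such a $t$ fixed we obtain $\iota>0$, which completes the argument. The only genuinely substantive point is this compatibility of the two constraints on $t$ (small enough for the dissection $\mathfrak{m}\cup\mathfrak{t}=\mathfrak{B}\cup\overline{\mathfrak{B}}\cup\mathfrak{n}$ to be useful, yet large enough that $(s-2)t>\tfrac12$); everything else is a direct substitution of Lemma~\ref{2.2} and (\ref{mesB}), so I do not anticipate any real obstacle.
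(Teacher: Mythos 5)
Your proposal is correct and follows essentially the same route as the paper: the supremum bound from Lemma~\ref{2.2} for $f_{\theta_1}f_{\theta_2}$, the pointwise bound $|f_{\theta_3}|<P^{1-t}$ from the definition of $\overline{\mathfrak{B}}$, and the measure bound (\ref{mesB}) with $Q=(2k)^{-1}P^{1/4}$. The only (cosmetic) difference is that the paper splits $f_{\theta_3}^{s-2}=f_{\theta_3}^{s-2-u}\cdot f_{\theta_3}^{u}$ and cites Lemma~\ref{ulemma} for the $u$-th power, obtaining $\iota=(s-2-u)t$, whereas you apply the sup-times-measure estimate to all $s-2$ copies at once and get $\iota=(s-2)t-\tfrac12$; both require exactly the same global compatibility condition $t(s-2)>\tfrac12$ with $2k(k-1)t<1$, which the paper also invokes later.
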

\begin{proof}
By choosing $t$ so that $2k(k-1)t$ is as close as we like to 1, note that we can always find a $u$ such that $1/(2t)<u<k^2\leq s-2$. Applying Lemma \ref{ulemma} and (\ref{TPbound}), we see that
\begin{align*}
\int_{\overline{\mathfrak{B}}_v}\left|f_{\theta_1}(\alpha)f_{\theta_2}(\alpha)f_{\theta_3}(\alpha)^{s-2}\right|\,d\alpha &\ll \sup_{\alpha\in\overline{\mathfrak{B}}_v}\left|f_{\theta_1}(\alpha)f_{\theta_2}(\alpha)f_{\theta_3}(\alpha)^{s-2-u}\right|  \int_{\overline{\mathfrak{B}}_v}\left|f_{\theta_3}(\alpha)\right|^u\,d\alpha \\
&\ll P^2 \,T(P)^{-1}(P^{1-t})^{s-2-u} P^{u-k}\\
&\ll P^{s-k-(s-2-u)t}\,T(P)^{-1}\\
&= P^{s-k-\iota}\,T(P)^{-1},
\end{align*}
where $\iota=(s-2-u)t >0$.
\end{proof}

Consequently, we can add in the contribution from the trivial arcs to show that we have the required estimate on $\overline{\mathfrak{B}}$.
\begin{lemma} \label{lemmaBbar}
We have
\begin{equation*}
\int_{\overline{\mathfrak{B}}}\left|f_{\theta_1}(\alpha)f_{\theta_2}(\alpha)f_{\theta_3}(\alpha)^{s-2} K_\pm(\alpha)\right|\,d\alpha = o(P^{s-k}).
\end{equation*}
\end{lemma}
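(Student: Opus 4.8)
The plan is to partition $\overline{\mathfrak{B}}$ into the unit intervals $\overline{\mathfrak{B}}_v=\overline{\mathfrak{B}}\cap[v,1+v)$ over $v\in\mathbb{Z}$, and to estimate separately the intervals sitting inside the minor arc $\mathfrak{m}$ and those straying into the trivial arc $\mathfrak{t}$. Since $\overline{\mathfrak{B}}\subseteq\mathfrak{N}\subseteq\mathfrak{m}\cup\mathfrak{t}$ and $\mathfrak{t}=\{\alpha:|\alpha|>T(P)\}$, any $v$ for which $\overline{\mathfrak{B}}_v\not\subseteq\mathfrak{m}$ must have $\overline{\mathfrak{B}}_v$ meeting $\mathfrak{t}$, and hence $|v|\geq T(P)-1$; for $P$ large this makes the two cases exhaustive (in particular $\overline{\mathfrak{B}}_0\subseteq\mathfrak{m}$). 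The two facts about the kernel I would use are both consequences of (\ref{K+-bound}): the crude bound $|K_\pm(\alpha)|\ll 1$ on the minor-arc intervals, and the decay bound $|K_\pm(\alpha)|\ll|\alpha|^{-2}L(P)$ on the trivial-arc intervals.

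For the intervals with $\overline{\mathfrak{B}}_v\subseteq\mathfrak{m}$: because $\mathfrak{m}\subseteq[-T(P),T(P)]$, only $O(T(P))$ values of $v$ can contribute, and for each such $v$ Lemma \ref{slemma} (applicable since $s\geq k^2+2$) supplies, with a fixed $\iota>0$,
\begin{equation*}
\int_{\overline{\mathfrak{B}}_v}\left|f_{\theta_1}(\alpha)f_{\theta_2}(\alpha)f_{\theta_3}(\alpha)^{s-2}\right|\,d\alpha\ll P^{s-k-\iota}\,T(P)^{-1}.
\end{equation*}
Inserting $|K_\pm|\ll 1$ and summing over the $O(T(P))$ relevant $v$, the factor $T(P)^{-1}$ exactly absorbs the number of intervals, so this part contributes $\ll P^{s-k-\iota}=o(P^{s-k})$.

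For the remaining intervals I would use the trivial estimates $|f_{\theta_1}(\alpha)f_{\theta_2}(\alpha)|\ll P^2$ and $|f_{\theta_3}(\alpha)|^{s-2}\leq P^{s-2-u}|f_{\theta_3}(\alpha)|^u$, with $t$ and $u$ chosen exactly as in the proof of Lemma \ref{slemma} (so $2k(k-1)t<1$ and $1/(2t)<u<k^2\leq s-2$), and then Lemma \ref{ulemma}, which is valid for every $v$ and gives $\int_{\overline{\mathfrak{B}}_v}|f_{\theta_3}(\alpha)|^u\,d\alpha\ll P^{u-k-\iota'}$ for some fixed $\iota'>0$. This yields
\begin{equation*}
\int_{\overline{\mathfrak{B}}_v}\left|f_{\theta_1}(\alpha)f_{\theta_2}(\alpha)f_{\theta_3}(\alpha)^{s-2}\right|\,d\alpha\ll P^{s-k-\iota'}
\end{equation*}
uniformly in $v$. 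On such an interval $|\alpha|\geq|v|-1\geq|v|/2$, so (\ref{K+-bound}) gives $|K_\pm(\alpha)|\ll|v|^{-2}L(P)$, whence the contribution of the interval is $\ll|v|^{-2}L(P)P^{s-k-\iota'}$. Summing over $v$, using $\sum_{v\neq 0}|v|^{-2}\ll1$ and $L(P)\ll\log P$, this part is also $o(P^{s-k})$; combining the two cases proves the lemma.

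I expect no serious obstacle: the estimate is essentially a matter of organising pieces already assembled in Lemmas \ref{ulemma} and \ref{slemma}. The one point needing care is the interplay between the number of unit intervals covering $\mathfrak{m}$, which is $O(T(P))$, and the factor $T(P)^{-1}$ built into Lemma \ref{slemma}: these are meant to cancel, so no growth hypothesis on $T(P)$ is required. One should also check that the parameters $t,u$ fixed inside the proof of Lemma \ref{slemma} remain admissible when fed into Lemma \ref{ulemma} on the trivial arc, and that $v=0$ — and every bounded $v$ — falls into the minor-arc case once $P$ is large enough that $T(P)>1$.
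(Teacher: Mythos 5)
Your argument is correct and follows essentially the same route as the paper: decompose $\overline{\mathfrak{B}}$ into unit intervals, cancel the factor $T(P)^{-1}$ from Lemma \ref{slemma} against the $O(T(P))$ intervals lying in $\mathfrak{m}$, and use the decay $K_\pm(\alpha)\ll\alpha^{-2}L(P)$ on the far intervals. The only cosmetic difference is that on the trivial-arc intervals you route through Lemma \ref{ulemma} plus the trivial bound on the excess $s-2-u$ copies of $f_{\theta_3}$, whereas the paper applies $|f_{\theta_3}(\alpha)|\leq P^{1-t}$ to all $s-2$ copies together with the measure bound (\ref{mesB}) directly; both yield a power saving and the same conclusion.
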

\begin{proof}
Combining Lemma \ref{slemma} with (\ref{K+-bound}) and (\ref{mesB}), we find that
\begin{align*}
\int_{\overline{\mathfrak{B}}}&\left|f_{\theta_1}(\alpha)f_{\theta_2}(\alpha)f_{\theta_3}(\alpha)^{s-2}K_\pm(\alpha)\right|\,d\alpha\\
&\ll \sum_{v=0}^{\infty} \int_{\overline{\mathfrak{B}}_{v+P^{\xi-k}}}\left|f_{\theta_1}(\alpha)f_{\theta_2}(\alpha)f_{\theta_3}(\alpha)^{s-2}K_\pm(\alpha)\right|\,d\alpha\\
&\ll(T(P)-P^{\xi-k})P^{s-k-\iota}\,T(P)^{-1}+\sum_{v=-1}^{\infty} \frac{L(P)}{(v+T(P))^2} P^2 (P^{1-t})^{s-2}P^{1/2-k}.
\end{align*}
Since $1/2<t(s-2)$ by our choice of $t$, we conclude that
\begin{align*}
\int_{\overline{\mathfrak{B}}}\left|f_{\theta_1}(\alpha)f_{\theta_2}(\alpha)f_{\theta_3}(\alpha)^{s-2}K_\pm(\alpha)\right|\,d\alpha&\ll P^{s-k-\iota}+\frac{L(P)}{T(P)-1}P^{s-k+1/2-t(s-2)}\\
&=o(P^{s-k}).
\end{align*}
\end{proof}

On $\mathfrak{B}$, we use a recent result of Baker which improves on an earlier result of Wooley. Firstly, we define $\alpha_0,\dotsc,\alpha_k$ by
\begin{equation} \label{obelisk}
\alpha(x-\theta_3)^k=\sum_{i=0}^k \alpha_ix^i,
\end{equation}
and note that $\alpha_k=\alpha$.

\begin{theorem}\label{ec1.6}
Let $k\geq 3$ be an integer, and let $t$ be a positive real number with $2k(k-1)t<1$. Let $\zeta$ be a sufficiently small positive real number. Suppose that P is sufficiently large, and that $\left|f_{\theta_3}(\alpha)\right|\geq P^{1-t}$. Then there exist integers $q,a_1,\dotsc,a_k$ such that $1\leq q\leq P^{1-\zeta}$ and $\left|q\alpha_j-a_j\right|\leq P^{1-j-\zeta}$ for $1\leq j\leq k$. These integers also satisfy $(q,a_1,\dotsc,a_k)=1$ and $(q,a_2,\dotsc,a_k)\leq 2k^2$.

\end{theorem}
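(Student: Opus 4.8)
This is the ``inverse'' form of Weyl's inequality for a shifted $k$th power; it is due to Baker, sharpening an earlier estimate of Wooley, and the plan here is to reconstruct its proof. Writing $\psi(x)=\alpha(x-\theta_3)^k=\sum_{i=0}^k\alpha_i x^i$, the binomial theorem gives $\alpha_i=\binom{k}{i}(-\theta_3)^{k-i}\alpha$, so $\alpha_k=\alpha$ while, for $i<k$, each $\alpha_i$ is a fixed real multiple of $\alpha$ which is generically incommensurable with it (since $\theta_3$ need not be rational). Thus $f_{\theta_3}(\alpha)=\sum_{1\le x\le P}e(\psi(x))$ is a completely general Weyl sum of degree $k$, and the assertion is that the largeness hypothesis $\left|f_{\theta_3}(\alpha)\right|\ge P^{1-t}$, for $t$ in the stated range, forces the entire coefficient vector $(\alpha_1,\dots,\alpha_k)$ into a ``major box'' of the expected shape. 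I would proceed in three stages: (i) pin down the leading coefficient $\alpha_k=\alpha$; (ii) descend to the lower-degree coefficients to produce a single common modulus $q$; and (iii) the primitivity book-keeping, which is also where the shift causes the most trouble.

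For stage~(i), I would invoke Dirichlet's theorem with parameter $P^{k-1+\zeta}$ to obtain coprime $b,r$ with $r\le P^{k-1+\zeta}$ and $\left\|r\alpha\right\|\le\left|r\alpha-b\right|<P^{1-k-\zeta}$, and then feed $r$ into the Weyl inequality in the strong form made available by the resolution of Vinogradov's mean value theorem (the very input behind~(\ref{VMVT}); compare~\cite{bourgainexpl}):
\[
P^{1-t}\le\left|f_{\theta_3}(\alpha)\right|\ll P^{1+\epsilon}\bigl(r^{-1}+P^{-1}+rP^{-k}\bigr)^{1/(k(k-1))}.
\]
Rearranging gives $r^{-1}+P^{-1}+rP^{-k}\gg P^{-k(k-1)t-\epsilon}$, and since $2k(k-1)t<1$ the middle term is negligible while the last term would force $r\gg P^{k-1/2-\epsilon}$, contradicting $r\le P^{k-1+\zeta}$ for $\zeta$ sufficiently small. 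Hence $r\ll P^{k(k-1)t+\epsilon}<P^{1/2}<P^{1-\zeta}$, which is precisely the case $j=k$ of the conclusion (the eventual $q$ being a multiple of $r$).

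For stage~(ii), with $r\alpha$ now within $P^{1-k-\zeta}$ of an integer, I would split $x\in[1,P]$ into residue classes modulo $r$, write $x=y+rz$ with $0\le z\ll P/r$, and note that on each class the $z^k$-coefficient of $\psi(y+rz)$ equals $r^k\alpha$, which lies within $P^{1-\zeta}$ of an integer; hence, up to negligible error, the inner sum over $z$ is a Weyl sum of degree $k-1$ over the shorter interval $[0,P/r]$, whose coefficients are explicit polynomial combinations of $r$, $y$, $\theta_3$ and the $\alpha_i$. Largeness of $f_{\theta_3}(\alpha)$ then forces largeness of many of these shorter sums, and iterating the construction --- equivalently, running the standard Weyl-differencing argument for $\left|f_{\theta_3}(\alpha)\right|^{2^{k-1}}$ down to linear sums and then applying Davenport's lemma on the successive minima of the lattice spanned by the rows of the coefficient matrix of $\psi$, as in~\cite{baker} --- produces a single modulus $q$, a multiple of $r$ with $q\le P^{1-\zeta}$, such that $\left\|q\alpha_j\right\|\le P^{1-j-\zeta}$ for all $1\le j\le k$ simultaneously. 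Taking $a_j$ to be the nearest integer to $q\alpha_j$ and dividing $q,a_1,\dots,a_k$ through by their greatest common divisor then yields integers with $(q,a_1,\dots,a_k)=1$.

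Stage~(iii), the bound $(q,a_2,\dots,a_k)\le 2k^2$, is the step I expect to be the real obstacle, and it is where the shift makes itself felt: for the unshifted sum $\sum e(\alpha x^k)$ the lower coefficients vanish and both the descent and the primitivity statement are classical, whereas here every lower coefficient is a nonzero --- and generically irrational --- multiple of $\alpha$, so the modulus capturing $\alpha$ need not automatically capture the $\alpha_j$. The way I would handle it is to exploit the $\theta_3$-free algebraic identities satisfied by the scalars $\alpha_i=\binom{k}{i}(-\theta_3)^{k-i}\alpha$ --- for instance $2k\,\alpha_{k-2}\alpha_k=(k-1)\alpha_{k-1}^2$, and its analogues relating each $\alpha_j$ to $\alpha_{k-1}$ and $\alpha_k$. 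Once $\zeta$ is taken small enough to drive all the accumulated approximation errors below $1$, the integers $a_1,\dots,a_k$ must satisfy the corresponding integer identities exactly, and a common prime factor of $q,a_2,\dots,a_k$ --- which cannot also divide $a_1$, since $(q,a_1,\dots,a_k)=1$ --- is then forced to divide the bounded coefficients appearing in those identities; an elementary computation pins the resulting common factor at no more than $2k^2$. Making the error control in this last step precise is the only genuinely delicate point; everything else reduces to routine manipulation of exponential sums and Dirichlet's theorem.
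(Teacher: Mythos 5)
The paper does not prove this theorem from first principles: it quotes it as the case $A=P^{1-t}$ of Theorem 4 of Baker's paper \cite{Bakersmallfrac}, whose direct conclusion $1\leq q\leq P^{\lambda+kt}$, $\left|q\alpha_j-a_j\right|\leq P^{-j+\lambda+kt}$ is converted to the stated form by choosing $\lambda$ with $2(k-1)(\lambda+t)<1$ and $0<\zeta<1-\lambda-kt$, and the bound $(q,a_2,\dotsc,a_k)\leq 2k^2$ is asserted to be extractable from Baker's proof rather than derived. You are therefore attempting considerably more than the paper does. Your stages (i) and (ii) are a fair sketch of the machinery behind that citation, although stage (ii) ultimately defers the hard simultaneous-approximation step back to \cite{baker}, and the parenthetical closing stage (i) is not quite right: if the eventual $q$ is a proper multiple of $r$, then $\left|q\alpha-(q/r)b\right|\leq (q/r)P^{1-k-\zeta}$, which is weaker than the required $P^{1-k-\zeta}$ unless $q/r$ is bounded, so the case $j=k$ does not simply carry over from the Dirichlet step.

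The genuine gap is stage (iii), and it is larger than you allow. First, your approximate identities cannot in general be promoted to exact integer identities: writing $q\alpha_j=a_j+\epsilon_j$ with $\left|\epsilon_j\right|\leq P^{1-j-\zeta}$, the discrepancy $(k-1)a_{k-1}^2-2ka_{k-2}a_k$ is of size roughly $P^{2-k-\zeta}\left|a_{k-1}\right|+P^{3-k-\zeta}\left|a_k\right|+P^{1-k-\zeta}\left|a_{k-2}\right|$, and since the hypotheses place no upper bound on $\left|\alpha\right|$ (so $\left|a_k\right|\approx q\left|\alpha\right|$ with $q$ possibly as large as $P^{1-\zeta}$ is unbounded; in the application $\alpha$ ranges over the trivial arc $\mathfrak{t}$ as well), this need not be less than $1$. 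Second, even granting exactness, the quadratic identity is useless for your purpose: a prime dividing $q,a_{k-2},a_{k-1},a_k$ divides both sides of $(k-1)a_{k-1}^2=2ka_{k-2}a_k$ trivially, so it yields no bound on $(q,a_2,\dotsc,a_k)$. To exploit $(q,a_1,\dotsc,a_k)=1$ you must bring in $a_1$, e.g.\ via an identity of the shape $\alpha_{k-1}^{k-1}=k^{k-2}\alpha_1\alpha_k^{k-2}$, but this has degree $k-1$ and its error term involves $\left|a_j\right|^{k-2}$, so the error control is worse still; and the ``elementary computation'' producing exactly $2k^2$ is never carried out. In Baker's argument the divisor bound emerges from the construction of $q$ itself, not from a posteriori identities among the $a_j$, which is precisely why the paper extracts it from his proof rather than deducing it from the conclusion.
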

\begin{proof}
This is the case $A=P^{1-t}$ of \cite[Theorem 4]{Bakersmallfrac}, which in itself is an improvement of \cite[Theorem 1.6]{effcong} in the light of \cite{BDG}. The direct conclusion is that for small $\lambda$, we have $1\leq q\leq P^{\lambda+kt}$ and $\left|q\alpha_j-a_j\right|\leq P^{-j+\lambda+kt}$ for $1\leq j\leq k$; by choosing $\lambda$ such that $2(k-1)(\lambda+t)<1$ and $0<\zeta<1-\lambda-kt$, we reach the conclusion given above.
It is also possible to extract from the proof of this result in \cite{Bakersmallfrac} that the greatest common divisor $d=(q,a_2,\dotsc,a_k)$ satisfies $d\leq 2k^2$. Restricting to $(q,a_1,\dotsc,a_k)=1$ can only reduce the values of $q$ and $d$, so nothing is lost by doing so.
\end{proof}

We introduce some more notation. For integers $q,a_1,\dotsc,a_k$, write
\begin{equation*}
S(q,\mathbf{a})=\sum_{x=1}^q e\Big(\frac{a_kx^k+\dotsc+a_1x}{q}\Big),
\end{equation*}
and for real numbers $\beta_1,\dotsc,\beta_k$, write
\begin{equation*}
I(\bm{\beta})=\int_0^Pe(\beta_ky^k+\dotsc+\beta_1y)\,dy.
\end{equation*}

We will use Theorem \ref{ec1.6} in conjunction with the following lemma.
\begin{lemma} \label{Bakerlemma}
Let $k\geq 2$. Let $f(x)=\alpha_kx^k+\dotsc+\alpha_1x$, and suppose that there are integers $q,a_1,\dotsc,a_k$ such that
\begin{equation*}
\left|q\alpha_j-a_j\right|\leq (2k^2)^{-1}P^{1-j},\quad(1\leq j\leq k).
\end{equation*}
Writing
\begin{equation*}
d=(q,a_2,\dotsc,a_k),
\end{equation*}
and
\begin{equation*}
\beta_j=\alpha_j-\frac{a_j}{q},\quad(1\leq j\leq k),
\end{equation*}
we have
\begin{equation*}
\sum_{x=1}^{P}e(f(x))=q^{-1}S(q,\mathbf{a})I(\bm{\beta})+O(q^{1-1/k+\epsilon}d^{1/k}).
\end{equation*}
\end{lemma}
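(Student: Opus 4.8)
The plan is to carry out the standard Weyl-differencing-free major arc analysis: split the sum $\sum_{x=1}^P e(f(x))$ into residue classes modulo $q$, factor out the complete exponential sum $S(q,\mathbf{a})$, and replace the remaining sum over each class by the integral $I(\bm\beta)$ via Euler--Maclaurin (or partial summation), tracking the error. First I would write $x = qy + r$ with $1 \leq r \leq q$ and $0 \leq y \leq (P-r)/q$, so that $f(x) = \frac{a_k x^k + \dots + a_1 x}{q}\cdot\frac{q}{1} $... more precisely, writing $f(x) = \sum_j \alpha_j x^j = \sum_j \frac{a_j}{q} x^j + \sum_j \beta_j x^j$, the first part depends on $x$ only through $r \bmod q$ and contributes $e\big(\frac{a_k r^k + \dots + a_1 r}{q}\big)$ (using $x \equiv r$), while the second part is $e(\sum_j \beta_j x^j)$, which varies slowly. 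Then
\begin{equation*}
\sum_{x=1}^{P} e(f(x)) = \sum_{r=1}^{q} e\Big(\frac{a_k r^k + \dots + a_1 r}{q}\Big)\sum_{\substack{0 \leq y \leq (P-r)/q}} e\Big(\sum_{j=1}^k \beta_j (qy+r)^j\Big).
\end{equation*}

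Next I would approximate the inner sum over $y$ by an integral. The key estimate is that the function $g(y) = \sum_j \beta_j (qy+r)^j$ has derivative bounded by $\sum_j |\beta_j| \cdot j q (qy+r)^{j-1} \ll \sum_j |q\beta_j| P^{j-1} \cdot P^{1-j} = \sum_j |q\beta_j| P^{-?}$; using the hypothesis $|q\alpha_j - a_j| \leq (2k^2)^{-1}P^{1-j}$, i.e. $|\beta_j| \leq (2k^2 q)^{-1} P^{1-j}$, one gets $|g'(y)| \ll q^{-1}$ uniformly, so that comparing the sum to $\int e(g(y))\,dy$ costs $O(1)$ per residue class by the first-derivative bound on $\sum e(g(y)) - \int e(g(y))\,dy$. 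After the substitution $u = qy + r$ this integral becomes $q^{-1}\int_r^{P} e(\sum_j \beta_j u^j)\,du$, and extending the range to $[0,P]$ introduces error $\ll r/q$... which after summing over $r$ is acceptable. Collecting terms gives the main term $q^{-1} S(q,\mathbf{a}) I(\bm\beta)$ plus an error of size $O(q)$ from the $O(1)$-per-class terms; the refinement to $O(q^{1-1/k+\epsilon}d^{1/k})$ comes from inserting the classical bound $S(q,\mathbf{a}) \ll q^{1-1/k+\epsilon}d^{1/k}$ into a more careful bookkeeping, treating the error term as itself a fraction of the size of $S(q,\mathbf{a})$ rather than bounding it trivially by $q$.

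The main obstacle I anticipate is obtaining the sharp error term $O(q^{1-1/k+\epsilon}d^{1/k})$ rather than the crude $O(q^{1+\epsilon})$: this requires noting that the incomplete sums over $y$ in each residue class are not independent but are governed by the same exponential sum structure, and one must re-sum the errors against the Gauss-sum-type cancellation. The cleanest route is to cite the standard major-arc pruning lemma from the literature on Waring's problem (the argument is essentially \cite[Theorem 4.1]{vaughan} or the analogous statement in \cite{davenport}), observing that the shift $\theta_3$ is harmless because it has been entirely absorbed into the coefficients $\alpha_0, \dots, \alpha_k$ via (\ref{obelisk}), so that $f(x) = \sum_{j} \alpha_j x^j$ is a genuine polynomial with real coefficients and no trace of $\theta_3$ remains in the sum $\sum_{x=1}^P e(f(x))$. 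Thus the lemma reduces to a known statement about polynomial exponential sums, and the only thing to check is that the hypotheses on $|q\alpha_j - a_j|$ and on $d = (q,a_2,\dots,a_k)$ match those required by that statement, which they do by design of Theorem \ref{ec1.6}.
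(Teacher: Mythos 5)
The paper disposes of this lemma in one line: it is \cite[Lemma 4.4]{baker} verbatim, and your closing observation --- that the shift $\theta_3$ has been wholly absorbed into the real coefficients $\alpha_0,\dotsc,\alpha_k$ via (\ref{obelisk}), so the statement is a pure fact about polynomial exponential sums with no trace of the shift --- is exactly the reason a bare citation suffices. So your overall strategy matches the paper's. Two corrections, though. First, the references you propose to fall back on are not the right ones: \cite[Theorem 4.1]{vaughan} and the corresponding material in \cite{davenport} treat the one-coefficient sum $\sum e(\alpha x^k)$ and do not give the general-polynomial statement with the factor $d^{1/k}$, $d=(q,a_2,\dotsc,a_k)$; the correct source is Baker's \emph{Diophantine inequalities}, Lemma 4.4.

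Second, your self-contained sketch, as written, does not reach the stated error term. The residue-class decomposition plus a first-derivative comparison of sum and integral costs $O(1)$ per class (note that $|g'(y)|\ll 1$, not $\ll q^{-1}$: the factor $q$ from the chain rule exactly cancels the $q^{-1}$ in $|\beta_j|\leq(2k^2q)^{-1}P^{1-j}$), and summing over the $q$ classes yields only $O(q)$. Passing from $O(q)$ to $O(q^{1-1/k+\epsilon}d^{1/k})$ is not a matter of ``more careful bookkeeping'' with the bound on $S(q,\mathbf{a})$: one must control the incomplete rational sums $\sum_{M<x\leq M+N}e\bigl((a_kx^k+\dotsc+a_1x)/q\bigr)$ with $N\leq q$, by completing them via finite Fourier analysis and invoking the Hua-type bound $S(q,\mathbf{a})\ll q^{1-1/k+\epsilon}d^{1/k}$ for the resulting complete sums, then feeding this into a partial-summation argument. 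You correctly name this as the obstacle but do not supply the argument, and the saving over $O(q)$ is genuinely used downstream (the error must be dominated by $q^{-1/k+\epsilon}P(1+|\beta_k|P^k)^{-1/k}$ in the estimate (\ref{eq: betaest}) with $q$ as large as $P^{1-\zeta}$). Since the paper itself only cites Baker, the cleanest fix is simply to cite the correct lemma rather than to complete the sketch.
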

\begin{proof}
This is \cite[Lemma 4.4]{baker}.
\end{proof}

By the definition of $\mathfrak{B}$, we have met the conditions of Theorem \ref{ec1.6} for $\alpha\in\mathfrak{B}$. Fixing a sufficiently small $\zeta>0$, and a choice of $\lambda$ with $2(k-1)(\lambda+t)<1$ and $0<\zeta<1-\lambda-kt$, we may let $q(\alpha), a_1(\alpha),\dotsc,a_k(\alpha)$ be integers meeting the conditions given in the conclusion of that theorem, namely that $1\leq q(\alpha)\leq P^{1-\zeta}$ and $\left|q(\alpha)\alpha_j-a_j(\alpha)\right|\leq P^{1-j-\zeta}$ for $1\leq j\leq k$. The narrow width of this permissible range for $\left|q(\alpha)\alpha_j-a_j(\alpha)\right|$ and the coprimality condition ensure that $q(\alpha)$ and $\mathbf{a}(\alpha)$ are well-defined. Let
\begin{equation*}
\beta_j(\alpha)=\alpha_j-\frac{a_j(\alpha)}{q(\alpha)},\quad(1\leq j\leq k),
\end{equation*}
and
\begin{equation*}
d(\alpha)=(q(\alpha),a_2(\alpha),\dotsc,a_k(\alpha))\ll 1.
\end{equation*}

Note that for sufficiently large $P$, we have $P^{-\zeta}\leq (2k^2)^{-1}$.
Recalling (\ref{obelisk}), we apply Lemma \ref{Bakerlemma} to conclude that
\begin{align*}
f_{\theta_3}(\alpha)&=\sum_{x=1}^{P}e(\alpha(x-\theta_3)^k)\\
&= \sum_{x=1}^{P}e(\alpha_kx^k+\dotsc+\alpha_1x+\alpha_0)\\
&\ll q(\alpha)^{-1}S(q(\alpha),\mathbf{a}(\alpha))I(\bm{\beta}(\alpha))+q(\alpha)^{1-1/k+\epsilon}.
\end{align*}

We now use \cite[Theorems 7.1 and 7.3]{vaughan} to provide estimates for $S(q(\alpha),\mathbf{a}(\alpha))$ and $I(\bm{\beta}(\alpha))$. We have
\begin{equation*}
S(q(\alpha),\mathbf{a}(\alpha)) \ll q(\alpha)^{1-1/k+\epsilon},
\end{equation*}
and
\begin{equation*}
I(\bm{\beta}(\alpha))\ll P(1+\left|\beta_1(\alpha)\right|P+\dotsc+\left|\beta_k(\alpha)\right|P^k)^{-1/k}.
\end{equation*}
Hence we see that
\begin{align}
f_{\theta_3}(\alpha)&\ll q(\alpha)^{-1/k+\epsilon}P(1+\dotsc+\left|\beta_k(\alpha)\right|P^k)^{-1/k}+q(\alpha)^{1-1/k+\epsilon}\nonumber \\
&\ll q(\alpha)^{-1/k+\epsilon}P(1+\left|\beta_k(\alpha)\right|P^k)^{-1/k}. \label{eq: betaest}
\end{align}
We now use this result to bound the integral that we are interested in.

\begin{lemma} \label{otherone}
For $u>2k$, we have
\begin{equation*}
\int_{\mathfrak{B}_v}\left|f_{\theta_3}(\alpha)\right|^u\,d\alpha \ll P^{u-k}.
\end{equation*}
\end{lemma}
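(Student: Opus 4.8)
The plan is to bound $\int_{\mathfrak{B}_v}|f_{\theta_3}(\alpha)|^u\,d\alpha$ by summing over the disjoint pieces $\mathfrak{B}_v\cap\mathfrak{N}_{a,q}$ with $1\leq q\leq Q$ and $(a,q)=1$, and on each piece substituting the approximation (\ref{eq: betaest}) for $f_{\theta_3}(\alpha)$. Since the map $\alpha\mapsto q(\alpha),\mathbf{a}(\alpha)$ produced by Theorem \ref{ec1.6} has $q(\alpha)\leq P^{1-\zeta}$, whereas on $\mathfrak{N}_{a,q}$ we have a rational approximation with denominator $q\leq Q=(2k)^{-1}P^{1/4}$, the uniqueness of good rational approximations (guaranteed because $Q^2P^{-k}$ and $P^{1-\zeta}$ are far apart) forces $q(\alpha)$ to be a bounded multiple of $q/d$ for the relevant $q$; in particular $q(\alpha)\asymp q$ up to the $O(1)$ factor $d(\alpha)$, and $|\beta_k(\alpha)| = |\alpha - a_k(\alpha)/q(\alpha)|$ is comparable to $|\alpha - a/q|$. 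Thus on $\mathfrak{N}_{a,q}$ we get $|f_{\theta_3}(\alpha)| \ll q^{-1/k+\epsilon}P(1+P^k|\alpha-a/q|)^{-1/k}$.

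Next I would raise this to the $u$th power and integrate. The $\alpha$-integral over $|\alpha - a/q|\leq QP^{-k}$ of $(1+P^k|\alpha-a/q|)^{-u/k}$ is $\ll P^{-k}$, using $u>2k>k$ so that the exponent $u/k>1$ makes the integral converge (and the tail beyond $|t|\geq 1$ contributes a convergent geometric-type bound). This yields
\begin{equation*}
\int_{\mathfrak{B}_v}|f_{\theta_3}(\alpha)|^u\,d\alpha \ll P^{\epsilon u}\,P^u\,P^{-k}\sum_{q\leq Q}\sum_{\substack{1\leq a\leq q\\(a,q)=1}} q^{-u/k}\ll P^{u-k+\epsilon u}\sum_{q\leq Q}q^{1-u/k}.
\end{equation*}
Since $u>2k$ gives $1-u/k<-1$, the sum over $q$ converges, so we obtain $\int_{\mathfrak{B}_v}|f_{\theta_3}(\alpha)|^u\,d\alpha\ll P^{u-k+\epsilon u}$. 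To remove the $P^{\epsilon u}$ and get the clean bound $P^{u-k}$ claimed, I would either work with the slightly stronger uniform major-arc estimate (noting that the $q^\epsilon$ in $S(q,\mathbf{a})\ll q^{1-1/k+\epsilon}$ can be absorbed since $q\leq Q\leq P^{1/4}$ and the convergence of $\sum q^{1-u/k}$ is strict, leaving room to absorb $q^{\epsilon}$), or observe that any genuine power saving in the $q$-sum swallows the $P^{\epsilon}$-type losses; in effect the strict inequality $u>2k$ provides the needed slack.

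The main obstacle is the bookkeeping at the start: carefully verifying that the (a priori uncontrolled) approximation $q(\alpha),\mathbf{a}(\alpha)$ from Theorem \ref{ec1.6} is genuinely compatible with the Hardy--Littlewood datum $(a,q)$ defining $\mathfrak{N}_{a,q}$, i.e. that $q(\alpha)$ and $q$ determine each other up to the bounded factor $d(\alpha)\leq 2k^2$, and that consequently $q(\alpha)^{-1/k}\ll q^{-1/k}$ and $|\beta_k(\alpha)|\gg |\alpha-a/q|$ up to bounded constants. This is where the hypotheses $Q=(2k)^{-1}P^{1/4}$, the width $QP^{-k}$, and the coprimality conditions $(q,a_1,\dots,a_k)=1$, $(q,a_2,\dots,a_k)\leq 2k^2$ are all used; once this identification is in place the remainder is a routine major-arc integration exactly as in the classical Hardy--Littlewood treatment.
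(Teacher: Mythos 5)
Your proof is correct, but it is organised differently from the paper's, and the extra organisation is where your one imprecise claim lives. The paper never decomposes $\mathfrak{B}_v$ into the arcs $\mathfrak{N}_{a,q}$: it simply notes that the map $\alpha\mapsto(q(\alpha),a_k(\alpha),\beta_k(\alpha))$ supplied by Theorem \ref{ec1.6} is injective, so that the integral over $\mathfrak{B}_v$ (a unit interval, whence $O(q)$ choices of $a_k$ per $q$) is majorised by $P^u\sum_{q\leq P^{1-\zeta}}\sum_{a_k\leq q}q^{-u/k+\epsilon}\int_{|\beta_k|\leq P^{1-k-\zeta}}(1+|\beta_k|P^k)^{-u/k}\,d\beta_k$; the rest of the computation ($J\ll P^{-k}$ and $\sum_q q^{1-u/k+\epsilon}\ll 1$ since $u/k>2$) is identical to yours. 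This bypasses entirely the compatibility bookkeeping you identify as the main obstacle. Your route through $\mathfrak{N}_{a,q}$ does work, but be careful with the claim that $q(\alpha)\asymp q$ up to the factor $d(\alpha)\leq 2k^2$: once the two approximations are forced to coincide as rationals, $a_k(\alpha)/q(\alpha)=a/q$ with $a/q$ reduced, so $q(\alpha)=q\cdot(q(\alpha),a_k(\alpha))$, and the gcd $(q(\alpha),a_k(\alpha))$ is merely a multiple of $d(\alpha)=(q(\alpha),a_2(\alpha),\dotsc,a_k(\alpha))$ and need not be bounded; thus $q(\alpha)$ can be a large multiple of $q$. Fortunately your argument only uses the inequality in the favourable direction, $q\mid q(\alpha)$ hence $q(\alpha)^{-1/k+\epsilon}\leq q^{-1/k+\epsilon}$, together with the exact identity $\beta_k(\alpha)=\alpha-a/q$, so the conclusion stands. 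Your handling of the $\epsilon$-losses is also fine: the factor is really $q^{\epsilon u}\leq Q^{\epsilon u}$ inside the $q$-sum, which still converges because $1-u/k<-1$ strictly, exactly as in the paper. The net comparison: the paper's parametrisation argument is shorter and needs no matching of approximations; yours makes the Hardy--Littlewood arc structure explicit at the cost of a uniqueness-of-approximation lemma that must be stated with the correct gcd.
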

\begin{proof}
By the above definitions, we note that if $q(\alpha)=q(\alpha')$,\\ $a_k(\alpha)=a_k(\alpha')$ and $\beta_k(\alpha)=\beta_k(\alpha')$, then in fact $\alpha=\alpha'$. Using (\ref{eq: betaest}), we therefore have
\begin{align*}
\int_{\mathfrak{B}_v}\left|f_{\theta_3}(\alpha)\right|^u\,d\alpha &\ll \int_{\mathfrak{B}_v}(q(\alpha)^{-1/k+\epsilon}P(1+\left|\beta_k(\alpha)\right|P^k)^{-1/k})^u\,d\alpha\\
&\ll P^u \sum_{1\leq q\leq P^{1-\zeta}} \sum_{a_k=1}^q q^{-u/k+\epsilon} \int_{\left|\beta_k\right|\leq P^{1-k-\zeta}}(1+\left|\beta_k\right|P^k)^{-u/k}\,d\beta_k\\
&\ll P^u J \sum_{1\leq q\leq P^{1-\zeta}}q^{1-u/k+\epsilon},
\end{align*}
where, just as in \cite[Corollary 2.4]{chowWP}, we have
\begin{equation*}
J=\int_0^\infty(1+\beta P^k)^{-u/k}\,d\beta \ll P^{-k}.
\end{equation*}
Consequently, since $u/k>2$ and $\epsilon$ is small, we see that
\begin{align*}
\int_{\mathfrak{B}_v}\left|f_{\theta_3}(\alpha)\right|^u\,d\alpha &\ll P^{u-k} \sum_{1\leq q\leq P^{1-\zeta}}q^{1-u/k+\epsilon}\\
&\ll P^{u-k}.
\end{align*}
\end{proof}

\begin{lemma} \label{Bvlemma}
For $v$ with $\mathfrak{B}_v \subset \mathfrak{m}$, we have
\begin{align*}
\int_{\mathfrak{B}_v}\left|f_{\theta_1}(\alpha)f_{\theta_2}(\alpha)f_{\theta_3}(\alpha)^{s-2}\right|\,d\alpha &\ll P^{s-k}\,T(P)^{-1}.
\end{align*}
\end{lemma}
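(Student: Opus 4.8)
The plan is to argue exactly as in the proof of Lemma~\ref{slemma}, but to replace the crude measure estimate of Lemma~\ref{ulemma} by the sharper pointwise control available on $\mathfrak{B}_v$ through Lemma~\ref{otherone}. Since $\mathfrak{B}_v\subset\mathfrak{m}$, every $\alpha\in\mathfrak{B}_v$ satisfies $P^{\xi-k}\leq\left|\alpha\right|\leq T(P)$, so Lemma~\ref{2.2} applies and furnishes the uniform bound
\begin{equation*}
\sup_{\alpha\in\mathfrak{B}_v}\left|f_{\theta_1}(\alpha)f_{\theta_2}(\alpha)\right|\ll P^2T(P)^{-1}.
\end{equation*}
This is the only place the irrationality of $\theta_1$ is used.

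First I would strip off this uniform factor, bounding
\begin{equation*}
\int_{\mathfrak{B}_v}\left|f_{\theta_1}(\alpha)f_{\theta_2}(\alpha)f_{\theta_3}(\alpha)^{s-2}\right|\,d\alpha\leq\Big(\sup_{\alpha\in\mathfrak{B}_v}\left|f_{\theta_1}(\alpha)f_{\theta_2}(\alpha)\right|\Big)\int_{\mathfrak{B}_v}\left|f_{\theta_3}(\alpha)\right|^{s-2}\,d\alpha\ll P^2T(P)^{-1}\int_{\mathfrak{B}_v}\left|f_{\theta_3}(\alpha)\right|^{s-2}\,d\alpha.
\end{equation*}
Then, since $s\geq s_0(k)=k^2+(3k-1)/4$ we have $s-2>2k$, so Lemma~\ref{otherone} applied with $u=s-2$ gives $\int_{\mathfrak{B}_v}\left|f_{\theta_3}(\alpha)\right|^{s-2}\,d\alpha\ll P^{s-2-k}$. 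Multiplying the two estimates yields $P^2T(P)^{-1}P^{s-2-k}=P^{s-k}T(P)^{-1}$, which is the claimed bound.

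I do not expect a real obstacle here: the substance is already packaged into Lemma~\ref{2.2} and Lemma~\ref{otherone}, and what remains is bookkeeping. The one point to watch is that Lemma~\ref{otherone} requires the exponent to be strictly larger than $2k$; for the range of $s$ relevant to Theorem~\ref{mainThm} this is automatic, but to handle a general large $s$ robustly one can instead write $\left|f_{\theta_3}(\alpha)\right|^{s-2}=\left|f_{\theta_3}(\alpha)\right|^{s-2-u}\left|f_{\theta_3}(\alpha)\right|^{u}\leq P^{s-2-u}\left|f_{\theta_3}(\alpha)\right|^{u}$ for a fixed $u$ with $2k<u\leq s-2$, apply Lemma~\ref{otherone} to $\left|f_{\theta_3}(\alpha)\right|^{u}$, and reach the same conclusion.
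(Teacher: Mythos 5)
Your argument is correct and is essentially identical to the paper's proof: both extract $\sup_{\alpha\in\mathfrak{B}_v}\left|f_{\theta_1}(\alpha)f_{\theta_2}(\alpha)\right|\ll P^2T(P)^{-1}$ via Lemma \ref{2.2} and then apply Lemma \ref{otherone} with $u=s-2$, using that $s>2k+2$. Your extra remark about handling smaller $s$ by splitting off a power trivially is a sensible safeguard but is not needed in the paper's range.
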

\begin{proof}
Using (\ref{TPbound}), we have
\begin{align*}
\int_{\mathfrak{B}_v}\left|f_{\theta_1}(\alpha)f_{\theta_2}(\alpha)f_{\theta_3}(\alpha)^{s-2}\right|\,d\alpha &\ll \sup_{\alpha\in\mathfrak{B}_v}\left|f_{\theta_1}(\alpha)f_{\theta_2}(\alpha)\right| \int_{\mathfrak{B}_v}\left|f_{\theta_3}(\alpha)\right|^{s-2}\,d\alpha\\
&\ll P^2 \,T(P)^{-1}\int_{\mathfrak{B}_v}\left|f_{\theta_3}(\alpha)\right|^{s-2}\,d\alpha.
\end{align*}
Since we may suppose that $s>2k+2$, we apply Lemma \ref{otherone} with $u=s-2$ to obtain
\begin{align*}
\int_{\mathfrak{B}_v}\left|f_{\theta_1}(\alpha)f_{\theta_2}(\alpha)f_{\theta_3}(\alpha)^{s-2}\right|\,d\alpha &\ll P^2 \,T(P)^{-1}P^{s-2-k}\\
&\ll P^{s-k}\,T(P)^{-1}.
\end{align*}
\end{proof}

We now combine the minor and trivial arc estimates to deduce the required result for the whole of $\mathfrak{B}$.
\begin{lemma} \label{lemmaB}
We have
\begin{align*}
\int_{\mathfrak{B}}\left|f_{\theta_1}(\alpha)f_{\theta_2}(\alpha)f_{\theta_3}(\alpha)^{s-2} K_\pm(\alpha)\right|\,d\alpha &= o(P^{s-k}).
\end{align*}
\end{lemma}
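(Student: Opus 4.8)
The plan is to repeat the unit-interval dissection used for $\overline{\mathfrak B}$ in the proof of Lemma~\ref{lemmaBbar}, but now with Lemma~\ref{Bvlemma} playing the role of Lemma~\ref{slemma} on the minor arc, and with Lemma~\ref{otherone} combined with the trivial bound $|f_{\theta_1}(\alpha)f_{\theta_2}(\alpha)|\le P^2$ on the trivial arc. Since $\mathfrak B\subseteq\mathfrak m\cup\mathfrak t$ and $|f_{\theta_i}|$ and $K_\pm$ are even, it suffices to estimate the integral over $\mathfrak B\cap(0,\infty)$; I would cover this by the unit intervals $[v+P^{\xi-k},\,v+1+P^{\xi-k})$, $v=0,1,2,\dots$, writing $\mathfrak B_w=\mathfrak B\cap[w,1+w)$ as in Section~\ref{restofminor}, and treat the interval $v=0$, the intervals with $\mathfrak B_{v+P^{\xi-k}}\subset\mathfrak m$, and the intervals with $\mathfrak B_{v+P^{\xi-k}}\subset\mathfrak t$ (plus the at most one interval meeting both) separately.

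For $v=0$, Lemma~\ref{Bvlemma} together with $|K_\pm|\le1$ gives a contribution $\ll P^{s-k}T(P)^{-1}$. For $v\ge1$ one has $\alpha>v$ throughout $[v+P^{\xi-k},v+1+P^{\xi-k})$, so~(\ref{K+-bound}) yields $|K_\pm(\alpha)|\ll v^{-2}L(P)$; on the minor-arc intervals Lemma~\ref{Bvlemma} then bounds the contribution of each by $\ll v^{-2}L(P)\,P^{s-k}T(P)^{-1}$, and summing the convergent series $\sum_{v\ge1}v^{-2}$ gives a total $\ll L(P)T(P)^{-1}P^{s-k}$. On the trivial-arc intervals the bound~(\ref{TPbound}) is not available, since Lemma~\ref{2.2} only controls $|f_{\theta_1}f_{\theta_2}|$ on $\mathfrak m$; instead I would use $|f_{\theta_1}f_{\theta_2}|\le P^2$ together with Lemma~\ref{otherone} applied with $u=s-2>2k$ (permissible since we may assume $s>2k+2$, as in the earlier lemmas of this section) to get $\int_{\mathfrak B_{v+P^{\xi-k}}}|f_{\theta_1}f_{\theta_2}f_{\theta_3}^{s-2}|\,d\alpha\ll P^{s-k}$, hence a contribution $\ll v^{-2}L(P)P^{s-k}$ from each such interval. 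Since $v>T(P)-1$ for these intervals, $\sum v^{-2}\ll T(P)^{-1}$, so their total is again $\ll L(P)T(P)^{-1}P^{s-k}$, and this same estimate absorbs the straddling interval. Adding everything, $\int_{\mathfrak B}|f_{\theta_1}f_{\theta_2}f_{\theta_3}^{s-2}K_\pm|\,d\alpha\ll L(P)T(P)^{-1}P^{s-k}$, which is $o(P^{s-k})$ because $L(P)\le\log T(P)=o(T(P))$ as $P\to\infty$.

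The step needing the most thought is the trivial-arc range: there the saving $T(P)^{-1}$ of Lemma~\ref{Bvlemma}, which comes from~(\ref{TPbound}) and hence ultimately from the irrationality of $\theta_1$, is not directly available. The key observation making the plan work is that Lemma~\ref{otherone} holds for $\mathfrak B_w$ with any real index $w$ — its proof only uses the defining property $|f_{\theta_3}(\alpha)|\ge P^{1-t}$ of $\mathfrak B$, not the position of $w$ relative to $\mathfrak m$ or $\mathfrak t$ — so on the trivial arc one trades the missing $T(P)^{-1}$ for the genuine decay $v^{-2}$ of $K_\pm$, which, summed over $v>T(P)-1$, restores exactly the factor $T(P)^{-1}$. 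Everything else is the same routine bookkeeping already carried out for $\overline{\mathfrak B}$, and one should only check, as there, that the hypothesis $s>2k+2$ is a harmless restriction in this section.
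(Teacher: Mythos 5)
Your proposal is correct and follows essentially the same route as the paper: the same unit-interval dissection of $\mathfrak{B}$ into the sets $\mathfrak{B}_v$, with Lemma \ref{Bvlemma} plus the decay $K_\pm(\alpha)\ll\min\{1,\alpha^{-2}L(P)\}$ on the minor-arc intervals, and the trivial bound $|f_{\theta_1}f_{\theta_2}|\le P^2$ together with Lemma \ref{otherone} (valid for any $v$) on the trivial-arc and straddling intervals, summing $\sum v^{-2}\ll T(P)^{-1}$ there. Your closing observation that the $v^{-2}$ decay of the kernel replaces the unavailable saving from (\ref{TPbound}) on $\mathfrak{t}$ is exactly the mechanism the paper uses.
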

\begin{proof}
As in Lemma \ref{lemmaBbar}, we split the integral over $\mathfrak{B}$ into integrals over $\mathfrak{B}_v$, distinguishing between those intervals contained in $\mathfrak{m}$, and those contained in (or intersecting) $\mathfrak{t}$. Using  Lemma \ref{Bvlemma} and (\ref{K+-bound}), and writing $\omega=P^{\xi-k}$ and $z=T(P)-\omega-1$ for brevity, we have
\begin{align*}
\sum_{0\leq v\leq z}\int_{\mathfrak{B}_{v+\omega}}&\left|f_{\theta_1}(\alpha)f_{\theta_2}(\alpha)f_{\theta_3}(\alpha)^{s-2}K_\pm(\alpha)\right|\,d\alpha\\
&\ll P^{s-k}\,T(P)^{-1}+\sum_{1\leq v\leq z}\frac{L(P)}{(v+\omega)^2} \int_{\mathfrak{B}_{v+\omega}}\left|f_{\theta_1}(\alpha)f_{\theta_2}(\alpha)f_{\theta_3}(\alpha)^{s-2}\right|\,d\alpha\\
&\ll P^{s-k}\,T(P)^{-1}+ P^{s-k}\,\frac{L(P)}{T(P)}\sum_{1\leq v\leq z}\frac{1}{(v+\omega)^2} \\
&=o(P^{s-k}),
\end{align*}
while, by Lemma \ref{otherone},
\begin{align*}
\sum_{v=-1}^{\infty} \int_{\mathfrak{B}_{v+T(P)}}\left|f_{\theta_1}(\alpha)f_{\theta_2}(\alpha)f_{\theta_3}(\alpha)^{s-2}K_\pm(\alpha)\right|\,d\alpha
&\ll \sum_{v=-1}^{\infty} \frac{L(P)}{(v+T(P))^2} P^2 P^{s-2-k}\\
&\ll \frac{L(P)}{T(P)-1}P^{s-k}\\
&=o(P^{s-k}).
\end{align*}
Combining the above sums, we achieve the stated result for the whole of $\mathfrak{B}$.
\end{proof}

We now summarise our conclusion for the whole of the Davenport--Heilbronn minor and trivial arcs in another lemma.
\begin{lemma} \label{minortotal}
For any natural number $s\geq s_0(k)$, we have
\begin{equation*}
\int_{\mathfrak{m}\cup\mathfrak{t}}f_{\bm{\theta}}(\alpha) e(-\tau\alpha)K_\pm(\alpha)\,d\alpha = o(P^{s-k}).
\end{equation*}
Assuming the shifted Hua's lemma, the same result holds whenever $s\geq s_1(k)$.
\end{lemma}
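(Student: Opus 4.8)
The plan is to assemble the bound from the three pieces of the Davenport--Heilbronn minor and trivial arcs already analysed in this section, namely the decomposition $\mathfrak{m}\cup\mathfrak{t}=\mathfrak{B}\cup\overline{\mathfrak{B}}\cup\mathfrak{n}$ with $\mathfrak{n}=\mathfrak{v}\cap(\mathfrak{m}\cup\mathfrak{t})$. Since $\left|e(-\tau\alpha)\right|=1$, the triangle inequality reduces the claim to showing $\int_{\mathfrak{m}\cup\mathfrak{t}}\left|f_{\bm{\theta}}(\alpha)\right|\left|K_\pm(\alpha)\right|\,d\alpha=o(P^{s-k})$, and I would bound the contributions of $\mathfrak{n}$, $\mathfrak{B}$ and $\overline{\mathfrak{B}}$ separately. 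Throughout, $\theta_1$ is the irrational shift of the standing hypothesis, and $s\geq s_0(k)$ (respectively $s\geq s_1(k)$) forces in particular $s\geq k^2+2$, so the lemmas of this section are all applicable.

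For the piece over $\mathfrak{n}$, since $\mathfrak{n}\subseteq\mathfrak{v}$ we simply have $\int_{\mathfrak{n}}\left|f_{\bm{\theta}}(\alpha)\right|\left|K_\pm(\alpha)\right|\,d\alpha\leq\int_{\mathfrak{v}}\left|f_{\bm{\theta}}(\alpha)\right|\left|K_\pm(\alpha)\right|\,d\alpha$, which is $o(P^{s-k})$ by Corollary \ref{minormixed} --- unconditionally when $s\geq s_0(k)$, and under the shifted Hua's lemma when $s\geq s_1(k)$.

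For the piece over $\mathfrak{B}\cup\overline{\mathfrak{B}}$, the issue is that Lemmas \ref{lemmaBbar} and \ref{lemmaB} bound integrals only of the special shape $\left|f_{\theta_1}f_{\theta_2}f_{\theta_3}^{s-2}K_\pm\right|$ with $\theta_1$ irrational, whereas here we have a general product $f_{\theta_1}\cdots f_{\theta_s}$ of which only $f_{\theta_1}$ carries an irrationality hypothesis. I would therefore insert a factor $\left|f_{\theta_1}(\alpha)\right|^{1/(s-1)}$ alongside each of $\left|f_{\theta_2}(\alpha)\right|,\dotsc,\left|f_{\theta_s}(\alpha)\right|$ and distribute $\left|K_\pm(\alpha)\right|$ as $\prod_{i=2}^s\left|K_\pm(\alpha)\right|^{1/(s-1)}$, then apply H\"older's inequality with $s-1$ equal exponents to obtain
\[
\int_{\mathfrak{B}\cup\overline{\mathfrak{B}}}\left|f_{\bm{\theta}}(\alpha)\right|\left|K_\pm(\alpha)\right|\,d\alpha \leq \prod_{i=2}^{s}\left(\int_{\mathfrak{B}\cup\overline{\mathfrak{B}}}\left|f_{\theta_1}(\alpha)\right|\left|f_{\theta_i}(\alpha)\right|^{s-1}\left|K_\pm(\alpha)\right|\,d\alpha\right)^{1/(s-1)}.
\]
Each integral on the right is of the form treated by Lemmas \ref{lemmaBbar} and \ref{lemmaB} with the triple $\theta_2=\theta_3=\theta_i$ (so that $f_{\theta_1}f_{\theta_2}f_{\theta_3}^{s-2}=f_{\theta_1}f_{\theta_i}^{s-1}$); splitting the domain into $\mathfrak{B}$ and $\overline{\mathfrak{B}}$ and applying those two lemmas shows each such integral is $o(P^{s-k})$, whence the product of the $s-1$ factors is again $o(P^{s-k})$.

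Adding the three contributions gives the asserted bound, and the shifted Hua's lemma variant follows identically, using the conditional halves of Corollary \ref{minormixed} and the fact that $s_1(k)\geq k^2+2$ in the relevant ranges. The argument is essentially bookkeeping, since the substantive estimates were already obtained above; the one point meriting care is the H\"older split, which must be arranged so that a copy of the \emph{irrational} sum $f_{\theta_1}$ survives in every factor. This is exactly what allows Lemma \ref{2.2} (via Lemmas \ref{lemmaBbar} and \ref{lemmaB}) to be invoked without any irrationality assumption on $\theta_2,\dotsc,\theta_s$; a cruder AM--GM bound of the shape $\prod_{i=1}^s\left|f_{\theta_i}\right|\leq s^{-1}\sum_{i=1}^s\left|f_{\theta_i}\right|^s$ would not do, since $\left|f_{\theta_i}\right|^s$ with $\theta_i$ rational cannot be controlled on $\mathfrak{B}\cup\overline{\mathfrak{B}}$.
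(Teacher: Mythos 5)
Your proof is correct and follows essentially the same route as the paper: the same dissection $\mathfrak{m}\cup\mathfrak{t}=\mathfrak{N}\cup\mathfrak{n}$ (with $\mathfrak{N}=\mathfrak{B}\cup\overline{\mathfrak{B}}$), Corollary \ref{minormixed} on $\mathfrak{n}$, and a H\"older reduction of the general product to the special shape handled by Lemmata \ref{lemmaBbar} and \ref{lemmaB} on $\mathfrak{N}$. The only (harmless) difference is that you use $s-1$ factors of the form $\left|f_{\theta_1}f_{\theta_i}^{s-1}\right|$, taking $\theta_2=\theta_3=\theta_i$, whereas the paper keeps $f_{\theta_1}f_{\theta_2}$ in each of $s-2$ factors $\left|f_{\theta_1}f_{\theta_2}f_{\theta_i}^{s-2}\right|$; both arrangements retain the irrational shift $\theta_1$ in every factor, which is the essential point you correctly identify.
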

\begin{proof}
By symmetry, the results of this section hold equally well when $\theta_3$ is replaced by any other $\theta_i$ with $i\geq 4$. Consequently, applying H\"older's inequality, we see that
\begin{align*}
\int_{\mathfrak{N}}f_{\bm{\theta}}(\alpha) e(-\tau\alpha)K_\pm(\alpha)\,d\alpha &\ll \prod_{i=3}^s\Bigg(\int_{\mathfrak{N}}\left|f_{\theta_1}(\alpha)f_{\theta_2}(\alpha)f_{\theta_i}(\alpha)^{s-2}K_\pm(\alpha)\right|\,d\alpha\Bigg)^{1/(s-2)}\\
&=o(P^{s-k}),
\end{align*}
by Lemmata \ref{lemmaBbar} and \ref{lemmaB}. By Corollary \ref{minormixed}, and using the dissection $\mathfrak{m}\cup\mathfrak{t}=\mathfrak{N}\cup\mathfrak{n}$, we achieve the desired conclusion.
\end{proof}

\section{The major arc} \label{majorarc}

On the major arc
\begin{equation*}
\mathfrak{M}=\{\alpha\in\mathbb{R}:\left|\alpha\right|< P^{\xi-k}\},
\end{equation*}
we use a result of Chow, noting that it requires only that the number of variables be greater than $k$.

\begin{lemma} \label{chowmajor}
We have
\begin{equation*}
\int_{\mathfrak{M}}f_{\bm{\theta}}(\alpha) e(-\tau\alpha)K_\pm(\alpha)\,d\alpha=2\eta\Gamma(1+1/k)^s\Gamma(s/k)^{-1}P^{s-k}+o(P^{s-k}).
\end{equation*}
\end{lemma}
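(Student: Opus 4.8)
This is essentially \cite[Section 3]{chowWP}, and I sketch how one recovers it; crucially the argument needs only $s>k$ and is insensitive to the shifts. The plan is first to replace each sum $f_{\theta_i}(\alpha)$ on $\mathfrak{M}$ by the integral $v_{\theta_i}(\alpha)=\int_0^P e(\alpha(t-\theta_i)^k)\,dt$. Euler--Maclaurin summation gives $f_\theta(\alpha)=v_\theta(\alpha)+O(1+|\alpha|P^k)$, so on $\mathfrak{M}$, where $|\alpha|<P^{\xi-k}$, this error is $O(P^{\xi})$; since $f_\theta(\alpha),v_\theta(\alpha)\ll P$ trivially, telescoping the product yields $f_{\bm{\theta}}(\alpha)=\prod_{i=1}^s v_{\theta_i}(\alpha)+O(P^{s-1+\xi})$ throughout $\mathfrak{M}$. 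As $|K_\pm(\alpha)|\ll 1$ and $\text{mes}(\mathfrak{M})\ll P^{\xi-k}$, replacing $f_{\bm{\theta}}$ by $\prod_i v_{\theta_i}$ in the integral costs only $O(P^{s-1+2\xi-k})$, which is $o(P^{s-k})$ once $\xi<1/2$.

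Next I would extend the range of integration to all of $\mathbb{R}$. The standard stationary-phase estimate $v_\theta(\alpha)\ll\min\{P,|\alpha|^{-1/k}\}$ (unaffected by the shift) shows that for $|\alpha|>P^{\xi-k}\,(>P^{-k})$ one has $\prod_i v_{\theta_i}(\alpha)\ll|\alpha|^{-s/k}$, so the tail contributes $\ll\int_{|\alpha|>P^{\xi-k}}|\alpha|^{-s/k}\,d\alpha\ll P^{(\xi-k)(1-s/k)}=P^{s-k-\xi(s/k-1)}=o(P^{s-k})$, using $s>k$. Hence it suffices to evaluate $\int_{\mathbb{R}}\prod_{i=1}^s v_{\theta_i}(\alpha)\,e(-\tau\alpha)K_\pm(\alpha)\,d\alpha$.

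To evaluate this, write $\Sigma(\mathbf{t})=\sum_{i=1}^s(t_i-\theta_i)^k$, so that $\prod_i v_{\theta_i}(\alpha)\,e(-\tau\alpha)=\int_{[0,P]^s}e\big(\alpha(\Sigma(\mathbf{t})-\tau)\big)\,d\mathbf{t}$. Fubini's theorem (justified since $\int_{\mathbb{R}}|K_\pm|<\infty$) combined with the sandwich bounds $U_{\eta-\delta}(t)\le\int_{\mathbb{R}}e(\alpha t)K_-(\alpha)\,d\alpha\le U_\eta(t)$ and $U_\eta(t)\le\int_{\mathbb{R}}e(\alpha t)K_+(\alpha)\,d\alpha\le U_{\eta+\delta}(t)$ from Section~\ref{notation} then traps the integral between $V(\rho):=\text{mes}\{\mathbf{t}\in[0,P]^s:|\Sigma(\mathbf{t})-\tau|<\rho\}$ at $\rho=\eta-\delta$ and at $\rho=\eta+\delta$. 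Since $\delta=\eta L(P)^{-1}\to 0$, it remains to show $V(\rho)=2\rho\,\Gamma(1+1/k)^s\Gamma(s/k)^{-1}\tau^{s/k-1}+o(P^{s-k})$ uniformly for $|\rho-\eta|\le\delta$, and the lemma follows with both choices of sign.

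For the volume, substitute $u_i=t_i-\theta_i$. The part of the region where some $u_i$ lies outside $[0,P]$ is confined to an interval of length $O(1)$ in that coordinate while still satisfying the codimension-one condition $|\Sigma-\tau|<\rho$, hence has measure $O(P^{s-1-k})=o(P^{s-k})$; on the remainder the shifts have disappeared, leaving $\text{mes}\{\mathbf{u}\in[0,P]^s:|\textstyle\sum u_i^k-\tau|<\rho\}$. Substituting $v_i=u_i^k$ (so $du_i=k^{-1}v_i^{1/k-1}dv_i$ and $v_i\in[0,\tau]$, the upper bound being automatic when all $v_j\ge 0$ and $\sum v_j\le\tau$) converts this to $k^{-s}\big(W(\tau+\rho)-W(\tau-\rho)\big)$, where $W(T)=\Gamma(1/k)^s\Gamma(1+s/k)^{-1}T^{s/k}$ by Dirichlet's integral formula. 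A first-order Taylor expansion, together with $\Gamma(1/k)=k\Gamma(1+1/k)$ and $\Gamma(1+s/k)=(s/k)\Gamma(s/k)$, then gives $V(\rho)=2\rho\,\Gamma(1+1/k)^s\Gamma(s/k)^{-1}\tau^{s/k-1}+O(\tau^{s/k-2})$; with $\tau^{s/k-1}=P^{s-k}$, $\tau^{s/k-2}=P^{s-2k}$, and $\delta\to 0$, every error term is $o(P^{s-k})$. The only step requiring genuine care is this shift-removal together with the error bookkeeping — checking that the boundary slivers, the $\delta$-perturbation, and the major-arc approximation errors are all $o(P^{s-k})$ — but nothing deep is involved, which is exactly why the result needs only $s>k$.
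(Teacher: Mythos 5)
The paper proves this lemma by direct citation of Chow's equation (3.28), and your sketch is a correct reconstruction of the underlying argument (approximating $f_\theta$ by $v_\theta$ on $\mathfrak{M}$, extending the integral to $\mathbb{R}$ via the decay of $v_\theta$, sandwiching with the Freeman kernels, and evaluating the resulting volume by Dirichlet's integral after removing the shifts), so it follows essentially the same route as the source the paper relies on. The only point worth flagging is that your first step requires $\xi<1/2$, which is harmless since $\xi\in(0,1)$ may be fixed as small as one likes, and that the bound $O(P^{s-1-k})$ for the boundary slivers implicitly uses that some variable is of size $\gg P$ so that the constraint $|\Sigma(\mathbf{t})-\tau|<\rho$ confines that variable to an interval of length $O(P^{1-k})$.
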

\begin{proof}
This is \cite[equation (3.28)]{chowWP}.
\end{proof}

Combining Lemmata \ref{minortotal} and \ref{chowmajor} with (\ref{DHdissect}), we obtain the conclusion of Theorem \ref{mainThm}. Assuming the shifted Hua's lemma, we would achieve the same result whenever $s\geq s_1(k)$. In particular, this would provide a further improvement when $k=10$ or $k\geq 12$.

\newcommand{\noop}[1]{}


\begin{thebibliography}{10}

\bibitem{baker}
R.~C. Baker.
\newblock \emph{Diophantine inequalities}.
\newblock The Clarendon Press, Oxford University Press, New York, 1986.

\bibitem{Bakersmallfrac}
R.~C. Baker.
\newblock Small fractional parts of polynomials.
\newblock \emph{Funct. Approx. Comment. Math.} \textbf{55} (2016), no.~1,
  131--137.

\bibitem{bourgainexpl}
J.~Bourgain.
\newblock On the {V}inogradov mean value, \noop{2016}ar{X}iv:1601.08173.

\bibitem{BDG}
J.~Bourgain, C.~Demeter, and L.~Guth.
\newblock Proof of the main conjecture in {V}inogradov's mean value theorem for
  degrees higher than three.
\newblock \emph{Ann. of Math. (2)} \textbf{184} (2016), no.~2, 633--682.

\bibitem{cubes}
S.~Chow.
\newblock Sums of cubes with shifts.
\newblock \emph{J. Lond. Math. Soc. (2)} \textbf{91} (2015), no.~2, 343--366.

\bibitem{chowWP}
S.~Chow.
\newblock Waring's problem with shifts.
\newblock \emph{Mathematika} \textbf{62} (2016), no.~1, 13--46.

\bibitem{davenport}
H.~Davenport.
\newblock \emph{Analytic methods for {D}iophantine equations and {D}iophantine
  inequalities}.
\newblock Second edition. Cambridge University Press, Cambridge, 2005.

\bibitem{davheil}
H.~Davenport and H.~Heilbronn.
\newblock On indefinite quadratic forms in five variables.
\newblock \emph{J. London Math. Soc.} \textbf{21} (1946), 185--193.

\bibitem{freeman}
D.~E. Freeman.
\newblock Asymptotic lower bounds and formulas for {D}iophantine inequalities.
\newblock In \emph{Number theory for the millennium, II (Urbana, IL, 2000)},
  57--74. A K Peters, Natick, MA, 2002.

\bibitem{excepsets}
S.~T. Parsell and T.~D. Wooley.
\newblock Exceptional sets for {D}iophantine inequalities.
\newblock \emph{Int. Math. Res. Not. IMRN} \textbf{2014}, no.~14, 3919--3974.

\bibitem{vaughan}
R.~C. Vaughan.
\newblock \emph{The {H}ardy-{L}ittlewood method}.
\newblock Second edition. Cambridge University Press, Cambridge, 1997.

\bibitem{asympform}
T.~D. Wooley.
\newblock The asymptotic formula in {W}aring's problem.
\newblock \emph{Int. Math. Res. Not. IMRN} \textbf{2012}, no.~7, 1485--1504.

\bibitem{effcong}
T.~D. Wooley.
\newblock Vinogradov's mean value theorem via efficient congruencing.
\newblock \emph{Ann. of Math. (2)} \textbf{175} (2012), no.~3, 1575--1627.

\bibitem{wooleyk3}
T.~D. Wooley.
\newblock The cubic case of the main conjecture in {V}inogradov's mean value
  theorem.
\newblock \emph{Adv. Math.} \textbf{294} (2016), 532--561.

\end{thebibliography}
\end{document}